\newcommand{\dx}{\mathrm{d}x}
\newcommand{\R}{\mathbb{R}}
\newtheorem{theorem}{Theorem}[section]
\newtheorem{lemma}[theorem]{Lemma}
\newtheorem{proposition}[theorem]{Proposition}
\newtheorem{remark}{Remark}[section]
\theoremstyle{definition}
\numberwithin{equation}{section}
\begin{document}
  \title{\LARGE\bf{Positive solutions for a coupled nonlinear\\ Kirchhoff-type system with vanishing potentials}}
\date{}
\author{Lingzheng Kong,
Haibo Chen\thanks{
	Corresponding author. E-mail: \href{mailto:math_klz@csu.edu.cn}{math\_klz@csu.edu.cn(L. Kong)}, \href{mailto:math_chb@163.com}{math\_chb@163.com(H. Chen)}}\\
{\small School of Mathematics and Statistics, HNP-LAMA, Central South University,}\\
{\small Changsha, Hunan 410083, P.R. China}}
\maketitle
\begin{center}
 \begin{minipage}{14cm}
\begin{abstract}
In this paper, we consider the strongly coupled nonlinear Kirchhoff-type system with vanshing potentials:
\[\begin{cases}
-\left(a_1+b_1\int_{\mathbb{R}^3}|\nabla u|^2\dx\right)\Delta u+\lambda V(x)u=\frac{\alpha}{\alpha+\beta}|u|^{\alpha-2}u|v|^{\beta},&x\in\mathbb{R}^3,\\
-\left(a_2+b_2\int_{\mathbb{R}^3}|\nabla v|^2\dx\right)\Delta v+\lambda W(x)v=\frac{\beta}{\alpha+\beta}|u|^{\alpha}|v|^{\beta-2}v,&x\in\mathbb{R}^3,\\
u,v\in \mathcal{D}^{1,2}(\R^3),
\end{cases}\]
where $a_i>0$ are constants, $\lambda,b_i>0$ are  parameters for $i=1,2$, $\alpha,\beta>1$ and $\alpha+\beta\leqslant 4$, $V(x)$, $W(x)$ are nonnegative continuous potentials, the nonlinear term $F(x,u,v)=|u|^\alpha|v|^\beta$ is not 4-superlinear at infinity. Such problem cannot be studied directly by standard variational methods, even by restricting the associated energy functional on the Nehari manifold, because Palais-Smale sequences may not be bounded. Combining  some new detailed estimates with truncation technique, we obtain the existence of positive vector solutions for the above system when $b_1+b_2$ small and $\lambda$ large. Moreover, the asymptotic behavior of these vector solutions is also explored as $\textbf{b}=(b_1,b_2)\to \bf{0}$ and $\lambda\to\infty$. In particular, our results extend some known ones in previous papers that only deals with the case where $4<\alpha+\beta<6$.

\end{abstract}
 \vskip2mm
 \par
  {\bf Keywords: } Kirchhoff-type system; Positive vector solution; Truncation technique; Asymptotic behavior; Steep potential well
 \vskip2mm
 \par
  {\bf Mathematics Subject Classification.}  35J50; 35B38; Secondary: 35B40.
\end{minipage}
\end{center}
\vskip6mm
{\section{Introduction and statement of  results}\label{sec1}}
\setcounter{equation}{0}
\par
In this paper,  we study the existence and asymptotic behavior of positive vector solutions for  the following coupled Kirchhoff-type system in $\R^3$:
\begin{equation}\label{11}
\begin{cases}
-\left(a_1+b_1\int_{\R^3}|\nabla u|^2\dx\right)\Delta u+\lambda V(x)u=\frac{\alpha}{\alpha+\beta}|u|^{\alpha-2}u|v|^{\beta},&x\in\R^3,\\
-\left(a_2+b_2\int_{\R^3}|\nabla v|^2\dx\right)\Delta v+\lambda W(x)v=\frac{\beta}{\alpha+\beta}|u|^{\alpha}|v|^{\beta-2}v,&x\in\R^3,\\
u,v\in \mathcal{D}^{1,2}(\R^3),
\end{cases}\tag{$\mathcal{K}_{\bf{b},\lambda}$}
\end{equation}
where $a_i>0$ are constants, $\lambda,b_i>0$ are  parameters for $i=1,2$, $\alpha>1$, $\beta>1$ satisfy $\alpha+\beta\leqslant 4$. We assume that $V(x)$ and $W(x)$ satisfy the following hypotheses:
\vskip2mm
\begin{enumerate}
\item[$(H_1)$]
	$V(x),W(x)\in C(\R^3,[0,\infty))$. $\Omega_1=\textup{int } V^{-1}(0)$ and $\Omega_2=\textup{int } W^{-1}(0)$ have smooth boundaries with $\overline{\Omega}_1= V^{-1}(0)$, $\overline{\Omega}_2= W^{-1}(0)$, and ${\Omega}_1\cap\Omega_2\neq\emptyset$;
\item[$(H_2)$]
	There exists  $c>0$ such that the set $\mathcal{M}=\{x\in\R^3:V(x)W(x)\leqslant   c^2\} $ has finite positive Lebesgue measure.
\end{enumerate}
\par
This kind of conditions, first proposed by Bartsch-Wang\cite{BartschCPDE1995}, implies that $\lambda V$ and $\lambda W$ represent potential wells whose depths are controlled by $\lambda$, and has attracted the attention of several researchers, see \cite{BartschCCM2001,BartschZAMP2000,SJTJDE2014,FSXJDE2010,AlvesCVPDE2016,zhangduJDE2020,ZLZJDE2013}. $\lambda V$ and $\lambda W$ are called steep potential wells if $\lambda$ is large enough. We note that condition $(H_2)$ implies that $\Omega_1$ and $\Omega_2$ have finite positive Lebesgue measures.

More recently, Sun-Wu \cite{SJTJDE2014} considered the following scalar case of \eqref{11}
\begin{equation}\label{14}
\begin{cases}
-\left(a+b\int_{\R^3}|\nabla u|^2\dx\right)\Delta u+\lambda V(x)u=f(x,u),\hspace{1ex}x\in\R^N,\\
u\in H^1(\R^N),
\end{cases}
\end{equation}
where $N\geqslant 3$ and $f(x,u)$ is the potential $V$ satisfies the following conditions:
\vskip2mm
\begin{enumerate}
	\item[$(V_1)$]
	$V\in C(\R^N,[0,\infty))$, $\Omega=\textup{int }V^{-1}(0)$ is nonempty and has smooth boundary with $\overline{\Omega}=V^{-1}(0)$;
	\item[$(V_2)$]
	there exists $c>0$ such that the set $\{x\in\R^N:V(x)<c\}$ is nonempty and has finite Lebesgue measure.
\end{enumerate}
By assuming different conditions on the nonlinear term $f(x,u)$, they obtained the existence and the nonexistence of nontrivial solutions of \eqref{14} by using variational methods. Moreover, the authors also explored the asymptotic behavior of nontrivial solutions with the help of the Lions vanishing lemma \cite{PLLions1984}.
Very recently, Zhang-Du  \cite{zhangduJDE2020} studied \eqref{14} when $N=3$ and $f(x,u)=|u|^{p-2}u$ with $2<p<4$ and proved the existence and asymptotic behavior of positive solutions by combining the truncation technique and the parameter-dependent compactness condition. For more results about Kirchhoff-type problems like \eqref{14}, we refer the readers to \cite{SJTNA2020,DYBDCDS2018,ZLZJDE2013} and the references therein.
\par
Problem \eqref{14} is related to  the stationary case of the following equation proposed by Kirchhoff \cite{Kir}:
\begin{equation}\label{equ1.2}
u_{t t}-\left(a+b \int_{\Omega}|\nabla u|^{2} \mathrm{d} x\right) \Delta  u=f(x, u),\; (x,t)\in \Omega\times\R^+,
\end{equation}
where $\Omega\subset\R^N$ is a smooth domain, $u$ stands for the displacement, $f\in C(\Omega\times \R,\R)$ is the external force, $a$ is the initial tension and $b$ is related to the intrinsic properties of the string. It can be seen as an extension of the classical D'Alembert’s wave equation, particularly, taking into account the subsequent change in string length during the oscillations. Such a nonlocal problems also arise in biological systems that can be applied to describe the growth and movement depending on the average of itself, for example one species' population density. For more physical background, we refer the readers to \cite{ChipotNA1997,ArosioTAMS1996}.
After J.L. Lions \cite{JLLions1978} proposed a functional analysis approach to the equation \eqref{equ1.2},  the Kirchhoff-type problem have been received much attention, see \cite{ZhangJDE2006,zhangJMAA2006,AlvesCMA2005,SJTEdinburgh2016,DYBJFA2015,FigueiredoARMA2014,ZouJDE2012,liuZAMP2022,liuguoZAMP2015,LZSNA2019,LGBJDE2014,LGBCVPDE2015} and the references therein.
\par
We observe that there exist extensive papers  in the study of the coupled elliptic systems on the whole space, we refer the readers to \cite{LvEJQTDE2014,FSXJDE2010,LPJDE2017,SHXMMA2016,cgfMMA2017,WuJMP2012,zhouCMA2013} and the references therein. Let us state some
known results. When $a_1=a_2=1$, $b_1=b_2=0$, the system \eqref{11} is reduced to the following semilinear Schr\"{o}dinger system in $\R^N$:
\begin{equation}
\begin{cases}
-\Delta u+\lambda V(x)u=\frac{\alpha}{\alpha+\beta}|u|^{\alpha-2}u|v|^{\beta},&x\in\R^N,\\
-\Delta v+\lambda W(x)v=\frac{\beta}{\alpha+\beta}|u|^{\alpha}|v|^{\beta-2}v,&x\in\R^N,\\
u,v\in \mathcal{D}^{1,2}(\R^N),
\end{cases}
\end{equation}
where $N\geqslant 3$, $\lambda>0$ is a parameter, $\alpha, \beta>1$ and $\alpha+\beta<2^*=2N/(N-2)$. $V(x)$ and $W(x)$ satisfy $(H_1)$ and $(H_2)$.
In \cite{FSXJDE2010}, Furtado-Silva-Xavier studied the existence and multiplicity of solutions  when the parameter $\lambda$ is large enough. Moreover, they also studied the asymptotic behavior of these solutions when $\lambda\to\infty$ via the Nehari manifold on the limit system.
Shi-Chen \cite{SHXMMA2016} considered system \eqref{11} when $\lambda=1$ and the nonlinear terms are replaced by $\mu F_u(x,u,v)+|u|^{\tau-2}u$ and $\mu F_v(x,u,v)+|v|^4v$, respectively, where $4<\tau<6$ and $\mu>0$ is a parameter. The authors proved  the existence of ground state solutions when $V(x)=W(x)$ is a continuous and asymptotically periodic potential functional and the nonlinear term $F(x,u,v)$ is 4-superlinear at infinity and satisfies the following monotonicity condition, i.e., 
\vskip2mm
\begin{enumerate}
\item[$(F_1)$]
    $\frac{F(x,u,v)}{|(u,v)|^4}\to+\infty$ uniformly in $x\in\R^3$ as $(u,v)\to+\infty$;
\item[$(F_2)$]
	$u\to \frac{F_u(x,u,v)}{|u|^3}$ and $v\to \frac{F_v(x,u,v)}{|v|^3}$ are nondecreasing on $(-\infty,0)\cup(0,+\infty)$.
\end{enumerate}
\par
L\"{u}-Xiao \cite{LvEJQTDE2014} investigated the following coupled Kirchhoff-type systems 
\begin{equation}
\begin{cases}
-\left(a+b\int_{\R^3}|\nabla u|^2\dx\right)\Delta u+\lambda V(x)u=\frac{2\alpha}{\alpha+\beta}|u|^{\alpha-2}u|v|^{\beta},&x\in\R^3,\\
-\left(a+b\int_{\R^3}|\nabla v|^2\dx\right)\Delta v+\lambda W(x)v=\frac{2\beta}{\alpha+\beta}|u|^{\alpha}|v|^{\beta-2}v,&x\in\R^3,
\end{cases}
\end{equation}
where $\alpha,\beta>2$ with $\alpha+\beta<2^*=6$ and the potentials $V$ and $W$ satisfy the following conditions:
\vskip2mm
\begin{enumerate}
\item[$(H'_1)$]
    $V(x),W(x)\in C(\R^3,[0,\infty))$. $\Omega=\textup{int } V^{-1}(0)=\textup{int } W^{-1}(0)$ is nonempty with smooth boundary and $\overline{\Omega}=V^{-1}(0)=W^{-1}(0)$;
\item[$(H'_2)$]
    There exist  $M_1,M_2>0$ such that the sets $\{x\in\R^3:V(x)\leqslant   M_1\}$ and $\{x\in\R^3:W(x)\leqslant   M_2\}$ have finite positive Lebesgue measures.
\end{enumerate}
By using Nehari manifold and Mountain Pass Theorem, they obtain the existence and multiplicity of nontrivial vector solutions when the parameter $\lambda$ is  large enough, but don't study the asymptotic behavior of nontrivial vector solutions. In \cite{LvEJQTDE2014}, the assumption $4<\alpha+\beta$ is crucial to prove the boundedness of Palais-Smale sequences, which implies the 4-superlinear of the nonlinear term at infinity.
\par
In this paper, we extend the above results from two aspects. Firstly, we consider the Kirchhoff-type system \eqref{11} restriced on the subspace of $\mathcal{D}^{1,2}(\R^3)\times\mathcal{D}^{1,2}(\R^3)$, but not of $H^{1}(\R^3)\times H^{1}(\R^3)$, which prevents us from applying Lions vanishing lemma\cite{PLLions1984}  to study the asymptotic behavior of solutions
and exclude the trivial solutions as e.g. in  \cite{zhangduJDE2020,SJTJDE2014,ZLZJDE2013,SJTZAMP2015,WTFCPAA2016,LZSCMA2016}. 
Secondly, we study the nonlinearity of \eqref{11} satisfying $\alpha,\beta>1$ and $\alpha+\beta\leqslant 4$, which violates the conditions $(F_1)$ and $(F_2)$, and prevents us from using Nehari manifold and fibering methods as e.g. in  \cite{LvEJQTDE2014,SHXMMA2016,RabinowitzZAMP1992,FSXJDE2010}. 
It is natural to ask whether there exists  a positive vector solution for system  \eqref{11} restriced on  $\mathcal{D}^{1,2}(\R^3)\times\mathcal{D}^{1,2}(\R^3)$ when $2<\alpha+\beta\leqslant 4$. 
To the best of our knowledge, it seems that very little has been undertaken on this question in the literature.
\par
Motivated by the works described above, the purpose of this paper is to study the strongly coupled Kirchhoff-type system \eqref{11} in the case where $\alpha,\beta>1$ with $\alpha+\beta\leqslant   4$. More specifically, we shall study the existence of positive vector solutions for \eqref{11} in this case. Moreover, we also explore the asymptotic behavior of these solutions as $\bf{b}\to0$ and $\lambda\to \infty$.
\par

\par
To state our main results, we introduce the following spaces  associated with the potentials $V$ and $W$:\[E_{V}=\left\{u\in \mathcal{D}^{1,2}(\R^3):\int_{\R^3}V(x)u^2\dx<+\infty \right\}\]
and
\[E_{W}=\left\{u\in \mathcal{D}^{1,2}(\R^3):\int_{\R^3}W(x)u^2\dx<+\infty \right\}.\]
Thus the natural space in this paper is the Hilbert space $E=E_{V}\times E_{W}$ with the inner product and norm
\[\langle z,\zeta\rangle=\int_{\R^3}(\nabla u\nabla\varphi+\nabla v\nabla\psi+V(x)u\varphi+W(x)v\psi)\dx,\hspace{2ex} ||z||=\langle z,z\rangle^{1/2}\]
for any $z=(u,v)\in E$ and $\zeta=(\varphi, \psi)\in E$. Throughout this paper, without loss of generality, we always assume that $a_1=a_2=1$ in \eqref{11}. Given $\lambda>0$, we consider the Hilbert space   $E_\lambda=(E,||\cdot||_{\lambda})$ equipped with the inner product and norm 
\[\langle z,\zeta\rangle_\lambda=\int_{\R^3}(\nabla u\nabla\varphi+\nabla v\nabla\psi+\lambda V(x)u\varphi+\lambda W(x)v\psi)\dx,\hspace{2ex} ||z||_\lambda=\langle z,z\rangle_\lambda^{1/2}.\]
It is easy to see that $||z||\leqslant  ||z||_{\lambda}$ for $\lambda\geqslant 1$.
\par
We shall  prove the existence of positive vector solutions of \eqref{11}, to do this we consider $\mathcal{J}_{{\bf{b}},\lambda}:E_\lambda\to\R$ given by 
\begin{equation}\label{16}
\mathcal{J}_{\bf{b},\lambda}(z)=\frac{1}{2}||z||_\lambda^2+\frac{1}{4}(b_1\|\nabla u\|^4_{L^2(\R^3)}+b_2\|\nabla v\|^4_{L^2(\R^3)})-\frac{1}{\alpha+\beta}\int_{ \R^3}|u^+|^\alpha|v^+|^\beta \dx.
\end{equation}
In view of the conditions $(H_1)$ and $(H_2)$, the functional $\mathcal{J}_{\bf{b},\lambda}$ is well defined and of class $C^1$. Moreover, it is standard to see that \eqref{11} is variational and its solutions are the critical points of \eqref{16}.
\par
In our first result we study the existence of positive solutions of \eqref{11}. Here we call $z=(u,v)$ a positive vector function if the functions  $u$ and $v$ are positive a.e. in $\R^3$. We observe that, if $z=(u,v)\in H_0^1(\Omega_1)\times H_0^1(\Omega_2)$ is a nontrivial vector solution of \eqref{19}, then by the condition $(H_1)$, $z$ is also a nontrivial vector solution of \eqref{11} for any $\lambda>0$. Therefore, we are interested in  positive vector solution of \eqref{11}, and obviously it does not lie in $H_0^1(\Omega_1)\times H_0^1(\Omega_2)$. Moreover, it is reasonable to regard $b_1$ and $b_2$ as parameters in \eqref{11}, since, as already mentioned before, $b_1$ and $b_2$ are related to the intrinsic properties of the string.
\vskip4mm
\begin{theorem}\label{the1}
	Suppose that  $\alpha$, $\beta>1$ with $\alpha+\beta\leqslant 4$ and conditions $(H_1)$--$(H_2)$ hold. Then there exist four numbers $\Lambda^*>1$, $b_*>0$ and $\delta$, $T>0$ such that for any $\lambda>\Lambda^*$ and $0<b_1+b_2<b_*$, the system \eqref{11} possesses at least one positive vector solution $z_{\bf{b},\lambda}=(u_{\bf{b},\lambda},  v_{\bf{b},\lambda})\in E_\lambda$  satisfying
	\begin{equation}\label{13}
	\delta\leqslant ||z_{\bf{b},\lambda}||_\lambda\leqslant T,\text{ for any } b_1,b_2 \text{ and } \lambda.
	\end{equation} 
\end{theorem}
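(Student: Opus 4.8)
The plan is to combine a truncation of the Kirchhoff coefficients with the steep-potential-well compactness, apply the Mountain Pass Theorem to an auxiliary functional, and then show that its critical point is in fact a critical point of the original $\mathcal{J}_{\mathbf{b},\lambda}$. The structural obstruction, as stressed in the introduction, is that when $\alpha+\beta\leqslant4$ the quartic term $\tfrac14(b_1\|\nabla u\|_{L^2}^4+b_2\|\nabla v\|_{L^2}^4)$ dominates the nonlinearity along rays, so $\mathcal{J}_{\mathbf{b},\lambda}(tz)\to+\infty$ and there is neither a usable mountain-pass geometry nor a bounded Palais--Smale sequence. First I would replace each coefficient $M_i(t)=1+b_it$ by a truncated $M_i^T(t)$ that equals $1+b_it$ for $t\leqslant T^2$ and is non-decreasing and bounded for larger $t$, and build the functional $\mathcal{J}^T_{\mathbf{b},\lambda}$ with primitive $G_i^T(t)=\tfrac12\int_0^tM_i^T$. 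Since $G_i^T$ grows only quadratically at infinity while the nonlinearity has degree $\alpha+\beta>2$, this restores the global shape: $\mathcal{J}^T_{\mathbf{b},\lambda}(tz)\to-\infty$ along any $z$ with $\int_{\R^3}|u^+|^\alpha|v^+|^\beta\dx>0$ (take $z$ positive on $\Omega_1\cap\Omega_2$), and by the subcritical embedding $E_\lambda\hookrightarrow L^{\alpha+\beta}(\R^3)$, uniform for $\lambda\geqslant1$, one has $\int_{\R^3}|u^+|^\alpha|v^+|^\beta\dx\leqslant C\|z\|_\lambda^{\alpha+\beta}$, so with $\alpha+\beta>2$ the origin is a strict local minimum and $\mathcal{J}^T_{\mathbf{b},\lambda}(z)\geqslant\sigma>0$ on a sphere $\|z\|_\lambda=\rho$ with $\sigma,\rho$ independent of $\mathbf{b}$ and $\lambda$.

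Next I would verify the Palais--Smale condition for $\lambda$ large. Boundedness of a $(PS)_c$ sequence $z_n$ comes from estimating $\mathcal{J}^T_{\mathbf{b},\lambda}(z_n)-\tfrac{1}{\alpha+\beta}\langle(\mathcal{J}^T_{\mathbf{b},\lambda})'(z_n),z_n\rangle$: because $M_i^T\geqslant1$ and $\alpha+\beta>2$, the quadratic part $(\tfrac12-\tfrac1{\alpha+\beta})\|z_n\|_\lambda^2$ dominates once the at-most-quadratically-growing truncated Kirchhoff contribution is absorbed. Strong convergence of the weak limit then follows from $(H_1)$--$(H_2)$ in the Bartsch--Wang manner: writing $\mathcal{M}^c\subseteq\{V\geqslant c\}\cup\{W\geqslant c\}$ and using $\int_{\{V\geqslant c\}}u^2\dx\leqslant(\lambda c)^{-1}\|z\|_\lambda^2$, the tails of $\int_{\R^3}|u^+|^\alpha|v^+|^\beta\dx$ outside the finite-measure set $\mathcal{M}$ are uniformly small for $\lambda$ large, while on $\mathcal{M}$ one uses local compactness and H\"older's inequality. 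This produces the threshold $\Lambda^*$.

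The Mountain Pass Theorem then yields a critical point $z_{\mathbf{b},\lambda}$ of $\mathcal{J}^T_{\mathbf{b},\lambda}$ at a level $c^T_{\mathbf{b},\lambda}\geqslant\sigma$. The lower bound in \eqref{13} is immediate: at the critical point $\langle(\mathcal{J}^T_{\mathbf{b},\lambda})'(z),z\rangle=0$ with $M_i^T\geqslant1$ gives $\|z\|_\lambda^2\leqslant\int_{\R^3}|u^+|^\alpha|v^+|^\beta\dx\leqslant C\|z\|_\lambda^{\alpha+\beta}$, whence $\|z_{\mathbf{b},\lambda}\|_\lambda\geqslant\delta>0$. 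For the upper bound I would bound $c^T_{\mathbf{b},\lambda}$ uniformly using the test path $te$ with $e$ supported in $\Omega_1\cap\Omega_2$, so that $V\equiv W\equiv0$ on $\mathrm{supp}\,e$ and $\|e\|_\lambda$ is $\lambda$-independent; for $b_1+b_2$ small the one-dimensional maximum stays near its value at $\mathbf{b}=\mathbf{0}$, giving $c^T_{\mathbf{b},\lambda}\leqslant C_0$. Feeding this into the energy identity and using $b_1\|\nabla u\|^4_{L^2(\R^3)}+b_2\|\nabla v\|^4_{L^2(\R^3)}\leqslant(b_1+b_2)\|z\|_\lambda^4$ yields an a priori bound $\|z_{\mathbf{b},\lambda}\|_\lambda\leqslant T_0$ that, for $b_*$ small, is independent of $\mathbf{b}$ and $\lambda$. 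The hard part is precisely the consistent choice of the three constants: one must fix the truncation level $T$ larger than $T_0$, then shrink $b_*$ so that the transition-region errors (where $M_i^T$ departs from $1+b_it$) are dominated, and enlarge $\Lambda^*$ so that the previous paragraph applies, all while keeping $T_0$ and the compactness estimates uniform. Once $T>T_0$ is secured, $\|\nabla u_{\mathbf{b},\lambda}\|,\|\nabla v_{\mathbf{b},\lambda}\|<T$, so $M_i^T=1+b_it$ at $z_{\mathbf{b},\lambda}$ and the truncated critical point is a genuine critical point of $\mathcal{J}_{\mathbf{b},\lambda}$, i.e.\ a solution of \eqref{11} satisfying \eqref{13}.

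Finally, for positivity I would test $\mathcal{J}'_{\mathbf{b},\lambda}(z_{\mathbf{b},\lambda})$ against $(u^-,0)$ and $(0,v^-)$. Since $|u^+|^{\alpha-2}u^+$ and $|v^+|^{\beta-2}v^+$ vanish on the negativity sets, the nonlinear terms drop out and one is left with $M_1(\|\nabla u\|^2)\|\nabla u^-\|^2+\lambda\int_{\R^3}V(u^-)^2\dx=0$ and the analogous identity for $v$, forcing $u,v\geqslant0$; the strong maximum principle then upgrades this to $u,v>0$. If either component vanished, the coupled structure would force the other to solve a homogeneous equation and vanish as well, contradicting $\|z_{\mathbf{b},\lambda}\|_\lambda\geqslant\delta$. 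Hence $z_{\mathbf{b},\lambda}$ is a positive vector solution of \eqref{11}, completing the proof.
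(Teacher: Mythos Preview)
Your overall strategy matches the paper's exactly: truncate the Kirchhoff part, run the Mountain Pass Theorem on the truncated functional, use the steep-potential-well structure to recover compactness for large $\lambda$, and then show the critical point lies in the untruncated regime once $b_1+b_2$ is small. The positivity argument and the lower bound $\|z_{\mathbf b,\lambda}\|_\lambda\geqslant\delta$ are also the same.

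Two points deserve comment. First, your truncation is not the paper's: you cap the Kirchhoff \emph{coefficients} $M_i(t)=1+b_it$, whereas the paper multiplies the whole quartic term by a cut-off $\xi(\|z\|_\lambda^2/T^2)$ (so the Kirchhoff contribution vanishes identically once $\|z\|_\lambda>\sqrt2\,T$, at the price of an extra $\xi'$-term in the derivative). Both schemes work and lead to the same choice $T^2=\tfrac{2(\alpha+\beta)(D+1)}{\alpha+\beta-2}$; the paper's makes the case analysis in its Lemma~2.6 particularly clean, while yours keeps the derivative simpler but forces you to carry the harmless error $(b_1+b_2)T^2$ through the energy identity. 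Your remark that ``the hard part is the consistent choice of the three constants'' is exactly right, and the key observation that breaks the apparent circularity---also used in the paper---is that the upper bound $c^T_{\mathbf b,\lambda}\leqslant D$ coming from the test path $t\mapsto t(e_0,e_0)$ with $e_0\in C_0^\infty(\Omega_1\cap\Omega_2)$ is \emph{independent of $T$}, because $G_i^T\leqslant G_i$ pointwise. Once $D$ is fixed you choose $T$ from $D$ and then $b_*$ from $T$.

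Second, one inaccuracy: you invoke ``the subcritical embedding $E_\lambda\hookrightarrow L^{\alpha+\beta}(\R^3)$''. Under the present hypothesis $(H_2)$ (which controls only $\{VW\leqslant c^2\}$, not $\{V\leqslant c\}$ or $\{W\leqslant c\}$ separately) there is \emph{no} componentwise embedding $E_V\hookrightarrow L^p(\R^3)$ for $p<6$; this is precisely why the paper works in $\mathcal D^{1,2}\times\mathcal D^{1,2}$ rather than $H^1\times H^1$ (cf.\ Remark~1.1). What you actually need---and what does hold---is the \emph{product} estimate $\int_{\R^3}|u|^\alpha|v|^\beta\,\dx\leqslant\hat c\,\|z\|_\lambda^{\alpha+\beta}$ of Lemma~2.1, obtained by splitting $\R^3=\mathcal M\cup\mathcal M^c$ and using $\sqrt{VW}>c$ on $\mathcal M^c$. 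The same splitting (not the decomposition $\mathcal M^c\subset\{V\geqslant c\}\cup\{W\geqslant c\}$ you wrote) is what drives the compactness for large $\lambda$. With that correction, your sketch is complete.
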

\begin{remark}
	\rm{}It is worth mentioning that the conditions $(H'_1)$ and $(H'_2)$ are stronger than $(H_1)$ and $(H_2)$, respectively. They are both two different equivalent forms of the conditions $(V_1)$ and $(V_2)$ if we let $V(x)=W(x)$. In \cite{SJTJDE2014}, 
	the condition $(V_2)$ is crucial to prove the continuity of the work space embedded into the Sobolev space $H^1(\R^3)$, which can be applied directly in \cite{LvEJQTDE2014} under the condition $(H'_2)$, but not in the present paper under the condition $(H_2)$.  Hence, we don't expect to find positive vector solutions for \eqref{11} in $H^1(\R^3)$. 
\end{remark}
\begin{remark}
	\rm{}\eqref{11} has no nontrivial vector solution in $H_0^1(\Omega_1)\times H_0^1(\Omega_2)$ when $b_1+b_2$ is large enough. Indeed, if $z=(u,v)\in H_0^1(\Omega_1)\times H_0^1(\Omega_2)$ is a nontrivial vector solution of \eqref{11}, then similar to the proof of Lemma \ref{L21}, we have 
	\[\begin{aligned}
	0=&\|\nabla u\|_{L^2(\Omega_1)}^2 + \|\nabla v\|_{L^2(\Omega_2)}^2+b_1\|\nabla u\|_{L^2(\Omega_1)}^4+b_2\|\nabla v\|_{L^2(\Omega_2)}^4-\int_{\Omega_1\cap\Omega_2}|u|^\alpha|v|^\beta \dx\\
	\geqslant &\|\nabla u\|_{L^2(\Omega_1)}^2 + \|\nabla v\|_{L^2(\Omega_2)}^2+b_1\|\nabla u\|_{L^2(\Omega_1)}^4+b_2\|\nabla v\|_{L^2(\Omega_2)}^4-C\left(\|\nabla u\|_{L^2(\Omega_1)}^2 + \|\nabla v\|_{L^2(\Omega_2)}^2\right)^{\frac{\alpha+\beta}{2}},\\
	\end{aligned}\]
	where $C=S^{-\frac{\alpha+\beta}{2}}|\Omega_1\cap\Omega_2|^{\frac{6-\alpha-\beta}{6}}>0$ and $|\cdot|$ is the Lebesgue measure. This is a contradiction for $\max\{b_1,b_2\}$ large enough.
\end{remark}
Now we give our main ideas for the proof of Theorem \ref{the1}. To overcome the obstacle of finding bounded Palais-Smale sequence for $\mathcal{J}_{\bf{b},\lambda}$, as in \cite{SJPJDE2012}, we use a cut-off function $\xi\in C^\infty(\R_+,[0,1])$ satisfying
\begin{equation}\label{199}
\begin{cases}
\xi(t)=1,&t\in[0,1],\\
\xi(t)=0,&t\in[2,+\infty),\\
\xi'(t)\leqslant 0,&t\in(0,\infty),\\
||\xi'||_\infty\leqslant 2,
\end{cases}
\end{equation}
and  move to study the following truncated functional $\mathcal{J}_{\bf{b},\lambda}^T$: $E_\lambda\to\R$ defined by
\begin{equation}\label{110}
\mathcal{J}_{\bf{b},\lambda}^T(z)=\frac{1}{2}||z||_\lambda^2+\frac{1}{4}\xi\left(\frac{||z||_\lambda^2}{T^2}\right)(b_1\|\nabla u\|^4_{L^2(\R^3)}+b_2\|\nabla v\|^4_{L^2(\R^3)})-\frac{1}{\alpha+\beta}\int_{ \R^3}|u^+|^\alpha|v^+|^\beta \dx,
\end{equation}
for any $T>0$. Firstly, by the Mountain Pass Theorem without Palais-Smale condition \cite{willem1996}, we can obtain a  Palais-Smale sequence $\{z_n\}=\{(u_n,v_n)\}$ of $\mathcal{J}_{\bf{b},\lambda}^T$ at the mountain pass level $c_{\bf{b},\lambda}^T$ for small $b_1+b_2>0$. Secondly, we get an important upper bound for $c_{\bf{b},\lambda}^T$ that is independent of $T$, $b_1,b_2$ and $\lambda$. By choosing an appropriate $T>0$, we can deduce that, up to a subsequence, $||z_n||_\lambda\leqslant T$ for all $n\in\mathbb{N}$ by restricting $b_1+b_2>0$ small enough, and so $\{z_n\}$ is a bounded Palais-Smale sequence of $\mathcal{J}_{\bf{b},\lambda}$, i.e.,   
\par\noindent
$$\sup\limits_{n\in\mathbb{N}}||z_n||_{\lambda}\leqslant T,\hspace{1ex}\mathcal{J}_{\bf{b},\lambda}(z_n)\to c^{T}_{\bf{b},\lambda}\hspace{1ex}\text{and}\hspace{1ex}||\mathcal{J}_{\bf{b},\lambda}'(z_n)||_{E^*_\lambda}\to 0,\hspace{1ex}\text{as}\hspace{1ex}n\to\infty,$$
where $E^*_\lambda$ is the dual space of $E_\lambda$. Thirdly,  to recover the compactness, inspired by \cite{BartschZAMP2000,zhangduJDE2020}, we establish the parameter-dependent compactness condition to prove $\mathcal{J}_{\bf{b},\lambda}$ satisfies $\textup{(PS)}_{c^{T}_{\bf{b},\lambda}}$ condition for $\lambda>0$ large. 
Finally, in order to apply strong maximum principle to prove the positivity  of the solutions, we give a detailed  estimate of the coupling term in the energy functional $\mathcal{J}_{\bf{b},\lambda}$.
\par
In our next results, we study the asymptotic behavior of the positive vector solutions obtained by Theorem \ref{the1} as $\bf{b}\to\bf{0}$ and $\lambda\to\infty$. 
\begin{theorem}\label{the2}
	Let $z_{\bf{b},\lambda}$ be the positive vector solutions of \eqref{11} obtained by Theorem \ref{the1}. Then for any $b_1+b_2\in (0,b_*)$ fixed, $z_{\bf{b},\lambda}\to z_{\bf{b}}$ in $E$ as $\lambda\to\infty$, where $z_{\bf{b}}=(u_{\bf{b}},v_{\bf{b}})\in H_0^1(\Omega_1)\times H_0^1(\Omega_2)$ is a positive vector solution of
	\begin{equation}\label{19}
	\begin{cases}
	-\left(1+b_1\int_{\Omega_1}|\nabla u|^2\dx\right)\Delta u=\frac{\alpha}{\alpha+\beta}|u|^{\alpha-2}u|v|^{\beta},&x\in\Omega_1,\\
	-\left(1+b_2\int_{\Omega_2}|\nabla v|^2\dx\right)\Delta v=\frac{\beta}{\alpha+\beta}|u|^{\alpha}|v|^{\beta-2}v,&x\in\Omega_2,\\
	u\in H_0^1(\Omega_1),v\in H^1_0(\Omega_2).
	\end{cases}\tag{$\mathcal{K}_{\bf{b},\infty}$}
	\end{equation}
\end{theorem}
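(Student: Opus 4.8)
The plan is to fix $\mathbf{b}$ with $b_1+b_2\in(0,b_*)$, take an arbitrary sequence $\lambda_n\to\infty$, and write $z_n=z_{\mathbf{b},\lambda_n}=(u_n,v_n)$ for the solutions provided by Theorem~\ref{the1}, so that $\mathcal{J}_{\mathbf{b},\lambda_n}'(z_n)=0$ and $\delta\le\|z_n\|_{\lambda_n}\le T$ with $T$ independent of $n$. Since $\|z\|\le\|z\|_\lambda$ for $\lambda\ge1$, the sequence is bounded in $E$ and in $\mathcal{D}^{1,2}(\R^3)\times\mathcal{D}^{1,2}(\R^3)$, and up to a subsequence I would extract $z_n\rightharpoonup z_0=(u_0,v_0)$ weakly in $E$, weakly in $\mathcal{D}^{1,2}\times\mathcal{D}^{1,2}$, and $z_n\to z_0$ in $L^p_{\mathrm{loc}}$ and a.e.\ in $\R^3$. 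The first substantive step is to show $z_0\in H_0^1(\Omega_1)\times H_0^1(\Omega_2)$: from $\lambda_n\int_{\R^3}V u_n^2\,\dx\le\|z_n\|_{\lambda_n}^2\le T^2$ I get $\int_{\R^3}V u_n^2\,\dx\le T^2/\lambda_n\to0$, so Fatou's lemma together with the a.e.\ convergence yields $\int_{\R^3}V u_0^2\,\dx=0$, whence $u_0=0$ a.e.\ on $\{V>0\}=\R^3\setminus V^{-1}(0)=\R^3\setminus\overline{\Omega}_1$. Since $u_0\in\mathcal{D}^{1,2}(\R^3)$ vanishes a.e.\ outside the smooth bounded set $\Omega_1$, it belongs to $H_0^1(\Omega_1)$; the same argument gives $v_0\in H_0^1(\Omega_2)$.

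Next I would invoke the compactness of the coupling functional that $(H_1)$--$(H_2)$ provide along such sequences (this is the role of the product condition $(H_2)$ and is the mechanism behind the parameter-dependent compactness used for Theorem~\ref{the1}), giving
\[\int_{\R^3}|u_n^+|^\alpha|v_n^+|^\beta\,\dx\to\int_{\R^3}|u_0^+|^\alpha|v_0^+|^\beta\,\dx,\]
and likewise convergence of the terms obtained by testing $\mathcal{J}_{\mathbf{b},\lambda_n}'(z_n)$ against $z_0$ to the same limit. To upgrade weak to strong convergence in $E$, I would test $\mathcal{J}_{\mathbf{b},\lambda_n}'(z_n)=0$ against $z_n$ and against $z_0$ and subtract. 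Because $u_0=v_0=0$ where $V,W>0$, the terms $\lambda_n\int V u_n u_0\,\dx$ and $\lambda_n\int W v_n v_0\,\dx$ vanish identically, so $\langle z_n,z_0\rangle_{\lambda_n}=\int_{\R^3}(\nabla u_n\nabla u_0+\nabla v_n\nabla v_0)\,\dx\to\|z_0\|^2$. The subtracted identity then expresses $\langle z_n,z_n-z_0\rangle_{\lambda_n}$ together with the two Kirchhoff contributions $b_1\|\nabla u_n\|_{L^2}^2\int_{\R^3}\nabla u_n\nabla(u_n-u_0)\,\dx$ and $b_2\|\nabla v_n\|_{L^2}^2\int_{\R^3}\nabla v_n\nabla(v_n-v_0)\,\dx$ as a quantity tending to $0$. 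Each of these three terms has nonnegative lower limit: the Kirchhoff coefficients are nonnegative and bounded, while weak lower semicontinuity gives $\liminf\int_{\R^3}\nabla u_n\nabla(u_n-u_0)\,\dx\ge0$ and $\liminf\langle z_n,z_n-z_0\rangle_{\lambda_n}\ge0$. Since a finite sum of terms each having nonnegative lower limit that tends to $0$ forces every term to $0$, I obtain $\langle z_n,z_n-z_0\rangle_{\lambda_n}\to0$; combined with $\langle z_0,z_n-z_0\rangle_{\lambda_n}\to0$ this gives $\|z_n-z_0\|_{\lambda_n}^2=\langle z_n,z_n-z_0\rangle_{\lambda_n}-\langle z_0,z_n-z_0\rangle_{\lambda_n}\to0$, hence $z_n\to z_0$ strongly in $E$.

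Finally, strong convergence gives $\|\nabla u_n\|_{L^2}^2\to\|\nabla u_0\|_{L^2}^2$ and $\|\nabla v_n\|_{L^2}^2\to\|\nabla v_0\|_{L^2}^2$, so passing to the limit in the weak formulation against test functions in $C_c^\infty(\Omega_1)$ and $C_c^\infty(\Omega_2)$ --- on whose supports $V$ and $W$ vanish, killing the potential terms --- identifies $z_0=z_{\mathbf{b}}$ as a weak solution of \eqref{19}. For positivity I would note that $\|z_n\|_{\lambda_n}\to\|z_0\|$ forces $\|z_0\|\ge\delta>0$, so $z_0\neq0$; the a.e.\ limits of the positive $u_n,v_n$ give $u_0,v_0\ge0$; and if, say, $u_0\equiv0$ then the $v$-equation reduces to $-(1+b_2\|\nabla v_0\|_{L^2}^2)\Delta v_0=0$ on $\Omega_2$, forcing $v_0\equiv0$ and contradicting $z_0\neq0$, so both components are nontrivial and the strong maximum principle yields $u_0>0$ in $\Omega_1$, $v_0>0$ in $\Omega_2$. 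The main obstacle is the strong convergence step: the nonlocal Kirchhoff terms obstruct the usual test-and-subtract scheme because their coefficients $b_i\|\nabla\cdot\|_{L^2}^2$ must themselves be shown to converge, and the remedy is precisely the sign argument above, exploiting that these coefficients are nonnegative so that the associated terms drop out under lower semicontinuity; the second delicate point, the compactness of the coupling term as $\lambda_n\to\infty$, rests essentially on the product condition $(H_2)$. Since no uniqueness for \eqref{19} is assumed, the convergence is understood along subsequences, which is the standard reading of the statement.
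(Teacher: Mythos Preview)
Your proposal is correct and follows essentially the same architecture as the paper: uniform bound from Theorem~\ref{the1}, weak limit, identification of the limit in $H_0^1(\Omega_1)\times H_0^1(\Omega_2)$, compactness of the coupling integral via the $\lambda$-dependent estimate coming from $(H_2)$ (the paper's \eqref{46}--\eqref{38}), upgrade to strong convergence, and the maximum principle. The only notable difference is in the strong-convergence step: the paper extracts subsequential limits $l_1,l_2,l_3$ of $\|z_n\|_{\lambda_n}^2,\ \|\nabla u_n\|_2^2,\ \|\nabla v_n\|_2^2$ and closes via the sandwich $\|z_{\mathbf b}\|^2\le\liminf\|z_n\|^2\le l_1\le\|z_{\mathbf b}\|^2$, whereas you test against $z_n$ and $z_0$, subtract, and use the sign/liminf argument on the three resulting terms---this is a clean equivalent that avoids naming the auxiliary limits and handles the Kirchhoff coefficients in one stroke.
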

\begin{remark}
	\rm{}
	It is worth emphasizing there are two usual methods to explore the asymptotic behavior of solutions, one is with the help of the  Lions vanishing lemma \cite{PLLions1984}, see e.g. \cite{BartschCCM2001,SJTJDE2014,zhangduJDE2020,ZLZJDE2013}, the other one is via the Nehari manifold on the limit system, see \cite{FSXJDE2010}. As already mentioned before, neither the Nehari method nor the Lions vanishing lemma is applicable in our case.  Fortunately, the key inequalities \eqref{46}  and \eqref{38} are obtained in Lemma \ref{L21}, allowing us to overcome this difficulty.
\end{remark}
\begin{theorem}\label{the3}
	Let $z_{\bf{b},\lambda}$ be the positive vector   solutions of \eqref{11} obtained by Theorem \ref{the1}. Then for any $\lambda\in (\Lambda^*, \infty )$ fixed, $z_{\bf{b},\lambda}\to z_\lambda$ in $E_\lambda$ as $\bf{b}\to\bf{0}$, where $z_\lambda\in E_\lambda$ is a positive vector solution of
	\begin{equation}\label{3res}
	\begin{cases}
	-\Delta u+\lambda V(x)u=\frac{\alpha}{\alpha+\beta}|u|^{\alpha-2}u|v|^{\beta},&x\in\R^3,\\
	-\Delta v+\lambda W(x)v=\frac{\beta}{\alpha+\beta}|u|^{\alpha}|v|^{\beta-2}v,&x\in\R^3,\\
	u,v\in \mathcal{D}^{1,2}(\R^3).
	\end{cases}\tag{$\mathcal{K}_{\bf{0},\lambda}$}
	\end{equation} 
\end{theorem}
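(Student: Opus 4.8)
The plan is to combine the uniform bounds of Theorem~\ref{the1} with the compactness already available at a fixed large $\lambda$, reading the limit system \eqref{3res} as the degenerate case $b_1=b_2=0$ of \eqref{11}. Fix $\lambda\in(\Lambda^*,\infty)$ and let $\mathbf{b}_n\to\mathbf{0}$ be any sequence with $b_{1,n}+b_{2,n}\in(0,b_*)$; set $z_n:=z_{\mathbf{b}_n,\lambda}=(u_n,v_n)$. By \eqref{13} we have $\delta\leqslant\|z_n\|_\lambda\leqslant T$ uniformly in $n$, so $\{z_n\}$ is bounded in $E_\lambda$ and, after passing to a subsequence, $z_n\rightharpoonup z_\lambda=(u_\lambda,v_\lambda)$ in $E_\lambda$. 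Let $\mathcal{J}_{\mathbf{0},\lambda}$ denote the functional \eqref{16} with $b_1=b_2=0$, whose critical points are exactly the solutions of \eqref{3res}.

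First I would show that $\{z_n\}$ is an (asymptotic) Palais--Smale sequence for $\mathcal{J}_{\mathbf{0},\lambda}$. Since $\mathcal{J}_{\mathbf{b}_n,\lambda}'(z_n)=0$ and $\|z_n\|_\lambda\leqslant T$ (so the truncation in \eqref{110} is inactive), for every $\zeta=(\varphi,\psi)\in E_\lambda$,
\[
\mathcal{J}_{\mathbf{0},\lambda}'(z_n)\zeta = -\,b_{1,n}\|\nabla u_n\|_{L^2(\R^3)}^2\!\int_{\R^3}\!\nabla u_n\nabla\varphi\,\dx \;-\; b_{2,n}\|\nabla v_n\|_{L^2(\R^3)}^2\!\int_{\R^3}\!\nabla v_n\nabla\psi\,\dx .
\]
Because $\|\nabla u_n\|_{L^2(\R^3)},\|\nabla v_n\|_{L^2(\R^3)}\leqslant\|z_n\|_\lambda\leqslant T$, the right-hand side is bounded in absolute value by $(b_{1,n}+b_{2,n})\,T^{3}\,\|\zeta\|_\lambda$, so $\|\mathcal{J}_{\mathbf{0},\lambda}'(z_n)\|_{E_\lambda^*}\to0$; moreover $\mathcal{J}_{\mathbf{0},\lambda}(z_n)$ differs from the bounded mountain-pass values $\mathcal{J}_{\mathbf{b}_n,\lambda}(z_n)$ by $\tfrac14\big(b_{1,n}\|\nabla u_n\|_{L^2(\R^3)}^4+b_{2,n}\|\nabla v_n\|_{L^2(\R^3)}^4\big)=O(b_{1,n}+b_{2,n})$ and hence stays bounded.

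The decisive step, and the main obstacle, is to upgrade this to strong convergence in $E_\lambda$. Since $\mathcal{D}^{1,2}(\R^3)$ does not embed compactly into $L^{\alpha+\beta}(\R^3)$, I cannot argue by a direct compact embedding; instead I would invoke the parameter-dependent compactness condition established for $\mathcal{J}_{\mathbf{b},\lambda}$ at large $\lambda$, which rests on the measure condition $(H_2)$ and the estimates \eqref{46} and \eqref{38} of Lemma~\ref{L21}. As the Kirchhoff part only adds a nonnegative coercive contribution, the same argument applies verbatim to $\mathcal{J}_{\mathbf{0},\lambda}$, so that $\mathcal{J}_{\mathbf{0},\lambda}$ satisfies $\textup{(PS)}_c$ at the relevant bounded level for $\lambda>\Lambda^*$. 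Applying it to the asymptotic Palais--Smale sequence $\{z_n\}$ yields $z_n\to z_\lambda$ strongly in $E_\lambda$; then by continuity of $\mathcal{J}_{\mathbf{0},\lambda}'$ together with $\mathcal{J}_{\mathbf{0},\lambda}'(z_n)\to0$, the strong limit satisfies $\mathcal{J}_{\mathbf{0},\lambda}'(z_\lambda)=0$, i.e. $z_\lambda$ solves \eqref{3res}.

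It remains to record nontriviality and positivity. Strong convergence and the lower bound in \eqref{13} give $\|z_\lambda\|_\lambda\geqslant\delta>0$, so $z_\lambda\not\equiv 0$; along a further subsequence $u_n\to u_\lambda$ and $v_n\to v_\lambda$ a.e. in $\R^3$, whence $u_\lambda,v_\lambda\geqslant0$, and the strong maximum principle applied to \eqref{3res} exactly as in the proof of Theorem~\ref{the1} forces $u_\lambda,v_\lambda>0$ a.e. Since the argument can be run along any sequence $\mathbf{b}_n\to\mathbf{0}$, we conclude that $z_{\mathbf{b},\lambda}$ converges, along subsequences, strongly in $E_\lambda$ to a positive vector solution $z_\lambda$ of \eqref{3res}, which is the assertion of the theorem. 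The only genuinely delicate point is the compactness step, where the absence of a compact Sobolev embedding on $\mathcal{D}^{1,2}(\R^3)$ is compensated by $(H_2)$ and the fixed-$\lambda$ estimates of Lemma~\ref{L21}, rather than by the Lions vanishing lemma.
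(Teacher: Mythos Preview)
Your proposal is correct and follows essentially the same route as the paper: you fix $\lambda>\Lambda^*$, use the uniform bounds \eqref{13} to extract a weakly convergent subsequence, upgrade to strong convergence by the same parameter-dependent compactness argument of Proposition~\ref{L28} (applied with $b_1=b_2=0$, which only removes a nonnegative term), and then read off nontriviality and positivity exactly as before. The paper is slightly terser---it simply says ``by a similar argument as used in the proof of Proposition~\ref{L28}'' and then tests the limit equation directly with $(\varphi,0)$ and $(0,\psi)$---whereas you make explicit that $\{z_n\}$ is an asymptotic $\textup{(PS)}$ sequence for $\mathcal{J}_{\mathbf{0},\lambda}$, but the substance is the same.
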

\begin{theorem}\label{the4}
	Let $z_{\bf{b},\lambda}$ be the positive vector   solutions of \eqref{11} obtained by Theorem \ref{the1}. Then $z_{\bf{b},\lambda}\to z_{\bf{0}}$ in $E$ as  $\bf{b}\to\bf{0}$ and $\lambda\to\infty$, where $z_{\bf{0}}=(u_{\bf{0}},v_{\bf{0}})\in H_0^1(\Omega_1)\times H_0^1(\Omega_2)$ is a positive vector solution of 
	\begin{equation}\label{4res}
	\begin{cases}
	-\Delta u=\frac{\alpha}{\alpha+\beta}|u|^{\alpha-2}u|v|^{\beta},&x\in\Omega_1,\\
	-\Delta v=\frac{\beta}{\alpha+\beta}|u|^{\alpha}|v|^{\beta-2}v,&x\in\Omega_2,\\
	u\in H_0^1(\Omega_1),v\in H^1_0(\Omega_2).
	\end{cases}\tag{$\mathcal{K}_{0,\infty}$}
	\end{equation}
\end{theorem}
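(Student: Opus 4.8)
The plan is to prove convergence along arbitrary sequences $\mathbf{b}_n \to \mathbf{0}$ and $\lambda_n \to \infty$, exploiting the fact that the a priori bounds \eqref{13} furnished by Theorem \ref{the1} are uniform in \emph{both} parameters, which is precisely what frees us from having to iterate the one-parameter limits of Theorems \ref{the2} and \ref{the3}. Writing $z_n = z_{\mathbf{b}_n,\lambda_n} = (u_n, v_n)$, the estimate $\|z_n\|_{\lambda_n} \leqslant T$ together with $\|z\| \leqslant \|z\|_\lambda$ for $\lambda \geqslant 1$ shows that $\{z_n\}$ is bounded in $E$, hence in $\mathcal{D}^{1,2}(\R^3) \times \mathcal{D}^{1,2}(\R^3)$. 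Passing to a subsequence, I obtain a weak limit $z_{\mathbf{0}} = (u_{\mathbf{0}}, v_{\mathbf{0}})$ in $E$, with strong convergence in $L^p_{\mathrm{loc}}$ for $p < 6$ and a.e. convergence.

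First I would localize the limit. Since $\lambda_n \int_{\R^3} V u_n^2\,\dx \leqslant \|z_n\|_{\lambda_n}^2 \leqslant T^2$, Fatou's lemma gives $\int_{\R^3} V u_{\mathbf{0}}^2\,\dx \leqslant \liminf_n \int_{\R^3} V u_n^2\,\dx \leqslant \liminf_n T^2/\lambda_n = 0$, so $u_{\mathbf{0}}$ vanishes a.e.\ on $\{V > 0\}$ and is therefore supported in $V^{-1}(0) = \overline{\Omega}_1$; the smoothness of $\partial\Omega_1$ in $(H_1)$ upgrades this to $u_{\mathbf{0}} \in H_0^1(\Omega_1)$, and symmetrically $v_{\mathbf{0}} \in H_0^1(\Omega_2)$. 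To identify the limit equation I test the weak form of \eqref{11} against $\varphi \in C_c^\infty(\Omega_1)$ and $\psi \in C_c^\infty(\Omega_2)$: on such supports $V = W = 0$ annihilates the potential terms for every $n$, the Kirchhoff coefficient obeys $b_{1,n}\|\nabla u_n\|_{L^2}^2 \leqslant (b_{1,n}+b_{2,n})T^2 \to 0$, the gradient terms pass to the limit by weak convergence, and the nonlinearities pass by strong local convergence. This yields exactly \eqref{4res} in the weak sense.

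The decisive step will be to upgrade weak to strong convergence in $E$. Here I would invoke the key inequalities \eqref{46} and \eqref{38} of Lemma \ref{L21}, which replace the unavailable Lions vanishing lemma and supply the parameter-independent control needed to prove that the coupling term converges, $\int_{\R^3} |u_n|^\alpha |v_n|^\beta\,\dx \to \int_{\Omega_1\cap\Omega_2} |u_{\mathbf{0}}|^\alpha |v_{\mathbf{0}}|^\beta\,\dx$. Splitting $\R^3$ along the finite-measure set $\mathcal{M}$ of $(H_2)$ and its complement, the tail over $\mathcal{M}^c$ is dominated using the constraint $V(x)W(x) > c^2$ together with the vanishing potential integrals, while on $\mathcal{M}$ compactness from the finite measure applies. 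Testing the equation for $z_n$ against $z_n$ and that for $z_{\mathbf{0}}$ against itself, and using that both the Kirchhoff contributions $b_{i,n}\|\nabla\cdot\|_{L^2}^4$ and the potential contributions $\int_{\R^3} V u_n^2\,\dx,\ \int_{\R^3} W v_n^2\,\dx$ tend to zero, I would deduce that $\|\nabla u_n\|_{L^2}^2 + \|\nabla v_n\|_{L^2}^2 \to \|\nabla u_{\mathbf{0}}\|_{L^2}^2 + \|\nabla v_{\mathbf{0}}\|_{L^2}^2$; since each gradient norm is weakly lower semicontinuous, both must converge individually, and combined with weak convergence in the Hilbert space this yields strong convergence of the gradients. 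As the potential parts of the difference norm already vanish (indeed $V u_{\mathbf{0}} = W v_{\mathbf{0}} = 0$ a.e.), this produces $z_n \to z_{\mathbf{0}}$ strongly in $E$.

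Finally I would establish nontriviality and positivity. The lower bound $\|z_n\|_{\lambda_n} \geqslant \delta$ and the strong convergence give $\|z_{\mathbf{0}}\| \geqslant \delta > 0$, so $z_{\mathbf{0}} \neq \mathbf{0}$; the coupled structure forces both components to be nonzero, for if, say, $u_{\mathbf{0}} \equiv 0$ then the right-hand side of \eqref{4res} vanishes and testing against $z_{\mathbf{0}}$ would give $\|z_{\mathbf{0}}\| = 0$, a contradiction. Since each $u_n, v_n$ is positive, the limits $u_{\mathbf{0}}, v_{\mathbf{0}}$ are nonnegative, and the strong maximum principle applied to \eqref{4res} upgrades this to $u_{\mathbf{0}} > 0$ in $\Omega_1$ and $v_{\mathbf{0}} > 0$ in $\Omega_2$. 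I expect the principal obstacle to lie in the compactness of the coupling term in the simultaneous double limit: the localization driven by $\lambda_n \to \infty$ and the degeneration of the equation as $\mathbf{b}_n \to \mathbf{0}$ must be controlled together and uniformly, and this is exactly the role played by the estimates \eqref{46} and \eqref{38}.
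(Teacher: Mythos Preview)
Your proposal is correct and follows essentially the same route the paper intends: the paper's own proof of Theorem~\ref{the4} simply says it is ``essentially same as the proof of Theorem~\ref{the2}'', and your argument is precisely that proof adapted to the simultaneous limit $\mathbf{b}_n\to\mathbf{0}$, $\lambda_n\to\infty$, using the uniform bounds \eqref{13}, the localization via $(H_1)$--$(H_2)$, the key estimates \eqref{46}--\eqref{38} for the coupling term, and the norm-comparison $\|z_n\|\leqslant\|z_n\|_{\lambda_n}$ to upgrade weak to strong convergence in $E$. One small caution: when you test $\mathcal{J}'_{\mathbf{b}_n,\lambda_n}(z_n)$ against $z_n$ the potential terms carry the factor $\lambda_n$, so rather than asserting that $\lambda_n\!\int Vu_n^2\,\dx\to 0$ directly you should argue (as in the paper's proof of Theorem~\ref{the2}) via $\|z_n\|_{\lambda_n}^2\to\|z_{\mathbf 0}\|^2$ and the chain $\|z_{\mathbf 0}\|^2\leqslant\liminf\|z_n\|^2\leqslant\limsup\|z_n\|^2\leqslant\lim\|z_n\|_{\lambda_n}^2$.
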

\begin{remark}	\rm
	\ 
	\indent
	\begin{enumerate}
		\item[(i)]Let $b_1(\text{or } b_2)>0$ be a small fixed parameter, similar to the proof of Theorem \ref{the2}, we can also obtain the asymptotic behavior of these solutions obtained by Theorem \ref{the1} and  as $b_2(\text{or } b_1)\to0$ and  $\lambda\to\infty$.
		\item[(ii)]Let $b_1(\text{or } b_2)>0$ be a small fixed parameter and $\lambda>0$ be a large fixed-paramete, similar to the proof of Theorem \ref{the3}, we can also obtain the asymptotic behavior of these solutions obtained by Theorem \ref{the1} as $b_2(\text{or } b_1)\to0$.
	\end{enumerate}
\end{remark}
\par
The remainder of this paper is organized as follows. In Section \ref{sec2}, we give some preliminary results which are crucial throughout the paper. we prove Theorem \ref{the1} in Section \ref{sec3}. The proofs of Theorems \ref{the2}, \ref{the3} and \ref{the4} will be given in Section \ref{sec4}.
\par
Throughout the paper, we use the following notations:
\begin{itemize}
	\item 
	$H^1(\R^3)$ is the usual Sobolev space equipped with the inner product abd norm
	\[
	(u,v)_{H^1(\R^3)}=\int_{\R^3}(\nabla u\nabla v+uv)\dx;\hspace{2ex}||u||_{H^1(\R^3)}^2=\int_{ \R^3}(|\nabla u|^2+u^2)\dx.
	\]
	\item
	$\mathcal{D}^{1,2}(\R^3) $ is the completion of $C^{\infty}_0(\R^3)$ with respect to the semi-norm 
	$$||u||_{\mathcal{D}^{1,2}}^2=\|\nabla u\|_2^2=\int_{ \R^3}\|\nabla u\|^2\dx.$$
	\item
	$L^r(\Omega)$, $1\leqslant r\leqslant\infty$, $\Omega\subset\R^3$, denotes a Lebesgue space,  the norm in $L^r(\R^3)$ is denoted by  $\|u\|_{r,\Omega}$,  where $\Omega$ is a proper subset of $\R^3$, by $\|u\|_r$ where $\Omega=\R^3$.
	\item
	$S$ is the best constant for the embedding of  $\mathcal{D}^{1,2}(\R^3)\hookrightarrow$ $ L^6(\R^3)$, i.e., 
	\begin{equation}\label{S}
	\|u\|_6\leqslant S^{-\frac{1}{2}}|\|\nabla u\||_{2}\quad \forall u\in \mathcal{D}^{1,2}(\R^3).
	\end{equation}
	\item
	 For a mensurable function $u$, we denote by $u^+$ and $u^-$ its positive and negative parts respectively, given by
	$u^+=\max\{u,0\},\hspace{1ex} u^-=\max\{-u,0\}.$
	\item
	 $\rightarrow$ and $\rightharpoonup$ denote the strong and weak convergence in the related function space respectively.
	\item
	$o_n(1)$ denotes any quantity which tends to zero when $n\to\infty$.
	\item
	Let $E$ be a real  Banach space and $I\in C^1(E,\R)$. We call that $I$ satisfies the Palais-Smale  condition at $c$ ($\textup{(PS)}_{c}$ condition for short), if any sequence  $\{u_n\}\subset E$ satisfying  $I(u_n)\to c$   and $||I'(u_n)||_{E^*}\to0$ possesses a convergent subsequence.	
	\item $C,C_1,C_2,\dots$ denote various positive constants, which may vary from line to line.				
\end{itemize}

{\section{Preliminary results}\label{sec2}}
\setcounter{equation}{0}
\vskip2mm
In this section, we give some preliminary lemmas for the proof of Theorem \ref{the1}. It is notable that the following lemma allows us to show the (PS) condition of $\mathcal{J}_{{\bf{b}},\lambda}$ in another way.
\begin{lemma}\label{L21}
	Suppose that  $\alpha$, $\beta>1$ with $\alpha+\beta\leqslant 4$ and $(H_1)$--$(H_2)$ hold. Then for any $z=(u,v)\in E_{\lambda}$ and $\lambda>0$, there exists a constant $\hat{c}>0$, independent of $\lambda$, such that 
	\[
		\int_{ \R^3}|u|^\alpha|v|^\beta \dx \leqslant   \hat{c} ||z||^{\alpha+\beta}.
	\]
\end{lemma}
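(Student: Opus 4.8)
The plan is to split $\R^3=\mathcal{M}\cup\mathcal{M}^c$, where by $(H_2)$ the set $\mathcal{M}=\{x\in\R^3:V(x)W(x)\leqslant c^2\}$ has finite measure, and to estimate the two pieces by different devices: pure Sobolev interpolation on $\mathcal{M}$, and the potential weights on $\mathcal{M}^c$. In both cases the goal is to control the integral by the full norm $\|z\|^2=\int_{\R^3}(|\nabla u|^2+|\nabla v|^2+V(x)u^2+W(x)v^2)\dx$. Since every term that appears is dominated by $\|z\|^2$ with constants built only from $S$, $c$, $|\mathcal{M}|$, $\alpha$, $\beta$, the resulting $\hat c$ will automatically be independent of $\lambda$.

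On $\mathcal{M}$, since $\alpha+\beta<6$ and $|\mathcal{M}|<\infty$, I would apply the generalized H\"older inequality with exponents $6/\alpha$, $6/\beta$ and $6/(6-\alpha-\beta)$ against the constant function $1$, obtaining $\int_{\mathcal{M}}|u|^\alpha|v|^\beta\dx\leqslant\|u\|_6^\alpha\|v\|_6^\beta|\mathcal{M}|^{(6-\alpha-\beta)/6}$. The Sobolev inequality \eqref{S} together with $\|\nabla u\|_2\leqslant\|z\|$ (and likewise for $v$) then closes this piece with a constant depending only on $S$, $|\mathcal{M}|$, $\alpha$, $\beta$.

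The main work is on $\mathcal{M}^c$, where $V(x)W(x)>c^2$, so that $(VW/c^2)^s\geqslant 1$ for every $s\geqslant 0$. Inserting this harmless factor I would write $\int_{\mathcal{M}^c}|u|^\alpha|v|^\beta\dx\leqslant c^{-2s}\int_{\mathcal{M}^c}(Vu^2)^s(Wv^2)^s|u|^{\alpha-2s}|v|^{\beta-2s}\dx$ and apply H\"older with the four exponents $1/s$, $1/s$, $6/(\alpha-2s)$, $6/(\beta-2s)$. The balance condition $\sum 1/q_i=1$ forces the choice $s=\frac{6-(\alpha+\beta)}{8}$, which is positive precisely because $\alpha+\beta<6$. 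This yields $\int_{\mathcal{M}^c}|u|^\alpha|v|^\beta\dx\leqslant c^{-2s}\big(\int_{\R^3}Vu^2\dx\big)^s\big(\int_{\R^3}Wv^2\dx\big)^s\|u\|_6^{\alpha-2s}\|v\|_6^{\beta-2s}$; bounding $\int Vu^2,\int Wv^2\leqslant\|z\|^2$ and using \eqref{S} collapses the right-hand side to a constant times $\|z\|^{\alpha+\beta}$, since the exponents add up as $2s+2s+(\alpha-2s)+(\beta-2s)=\alpha+\beta$.

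The one point that needs care, and the step I expect to be the real obstacle, is checking that this H\"older splitting is admissible: the residual exponents $\alpha-2s$ and $\beta-2s$ must be nonnegative and each $q_i\geqslant 1$. This is exactly where the hypothesis $\alpha,\beta>1$ enters. Writing the requirement as $s\leqslant\tfrac12\min\{\alpha,\beta\}$ and substituting $s=\frac{6-(\alpha+\beta)}{8}$ reduces, taking $\alpha\leqslant\beta$ without loss of generality, to $\alpha\geqslant\frac{6-\beta}{5}$, which holds because $\beta>1$ gives $\frac{6-\beta}{5}<1<\alpha$. Once this admissibility is confirmed, summing the two pieces gives the claim with $\hat c=S^{-(\alpha+\beta)/2}|\mathcal{M}|^{(6-\alpha-\beta)/6}+c^{-2s}S^{-(\alpha+\beta-4s)/2}$, which is manifestly independent of $\lambda$.
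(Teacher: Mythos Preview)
Your argument is correct. The H\"older splitting on $\mathcal{M}^c$ with the weight insertion $(VW/c^2)^s$ and the choice $s=(6-\alpha-\beta)/8$ is clean, and your admissibility check is right: $\alpha,\beta>1$ is exactly what forces $\alpha-2s,\beta-2s>0$.

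However, your route is genuinely different from the paper's. The paper does \emph{not} split the integral $\int|u|^\alpha|v|^\beta\dx$ directly over $\mathcal{M}$ and $\mathcal{M}^c$. Instead it first bounds $\int_{\R^3}|uv|\dx$ by the norm (this is where the $\mathcal{M}/\mathcal{M}^c$ split occurs), then writes $|u|^\alpha|v|^\beta=|u|^{\alpha-1}|v|^{\beta-1}|uv|$, applies H\"older with exponents $6,6,r=\tfrac{6}{8-\alpha-\beta}$, and finally interpolates $\int|uv|^r\dx$ between $\int|uv|^3\dx$ (pure Sobolev) and $\int|uv|\dx$ (the quantity already controlled). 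The upshot is an inequality of the form
\[
\int_{\R^3}|u|^\alpha|v|^\beta\dx\leqslant C\,\|z\|^{\alpha+\beta-2+\frac{6\theta}{r}}\left(\int_{\R^3}|uv|\dx\right)^{\frac{1-\theta}{r}},
\]
which is strictly more than the statement of the lemma. This intermediate form is the real payoff of the paper's approach: it is reused verbatim in Proposition~\ref{L28}, Lemma~\ref{lem3.2}, and the proof of Theorem~\ref{the2}, where one needs the factor $\int|uv|\dx$ isolated so that a $\lambda$-dependent (or $R$-dependent) smallness can be fed in later. Your direct argument proves Lemma~\ref{L21} more economically, but it does not produce this reusable inequality; if you follow your path you will have to redo essentially the same estimate, with the $\mathcal{M}^c$-piece carrying an explicit $\lambda^{-1}$, when you reach the compactness arguments.
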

\begin{proof}
	We borrow an idea from \cite[Lemmas 2.1--2.2]{FSXJDE2010}. 
	It follows from  $(H_2)$ that
	\begin{equation}\label{21}
	\begin{aligned}
	\int_{\R^3}|uv|\dx&\leqslant \int_{\mathcal{M}}|uv|\dx+\frac{1}{ c}\int_{{\mathcal{M}}^{c}}( V(x)u^2)^{\frac{1}{2}}( W(x)v^2)^{\frac{1}{2}}\dx\\
	&\leqslant \|u\|_6|v|_6|\mathcal{M}|^{\frac{2}{3}}+\frac{1}{c}
	\left(\int_{{\mathcal{M}}^{c}} V(x)u^2\dx\right)^{\frac{1}{2}}\left(\int_{{\mathcal{M}}^{c}}W(x)v^2\dx\right)^{\frac{1}{2}}\\
	&\leqslant S^{-1}\|\nabla u\|_2\|\nabla v\|_2|\mathcal{M}|^{\frac{2}{3}}+\frac{1}{2 c}\int_{\R^3}V(x)u^2+ W(x)v^2\dx\\
	&\leqslant \frac{1}{2}\max\left\{S^{-1}|\mathcal{M}|^{\frac{2}{3}}, \frac{1}{ c} \right\}||z||^2,
	\end{aligned}
	\end{equation}
	where $|\mathcal{M}|$ denotes the  Lebesgue measure of $\mathcal{M}$, and we have used the H\"{o}lder inequality and \eqref{S}. 
	Taking $r=\frac{6}{8-\alpha-\beta}$, then we can obtain that
	\begin{equation}\label{22}
	\frac{\alpha-1}{6}+\frac{\beta-1}{6}+\frac{1}{r}=1.
	\end{equation}
	Since $1<r\leqslant \frac{3}{2}$, there exists $\theta=\frac{\alpha+\beta-2}{16-2(\alpha+\beta)}\in (0,\frac{1}{4}]$ such that $r=2\theta+1$. We can deduce from H\"{o}lder inequality and \eqref{S} that
	\begin{equation}\label{23}
	\begin{aligned}
	\int_{\R^3}|uv|^r\dx&\leqslant \left(\int_{\R^3}|uv|^3\dx\right)^{\theta}\left(\int_{\R^3}|uv|\dx\right)^{1-\theta}\\
	&\leqslant \left(\frac{1}{2}\left(\|u\|_6^6+|v|^6_6\right) \right)^\theta\left(\int_{\R^3}|uv|\dx\right)^{1-\theta}\\
	&\leqslant \left(\frac{1}{2}S^{-3}\left(\|\nabla u\|_2^6+\|\nabla v\|^6_2\right) \right)^\theta\left(\int_{\R^3}|uv|\dx\right)^{1-\theta}\\
	&\leqslant S^{-3\theta}||z||^{6\theta}\left(\int_{\R^3}|uv|\dx\right)^{1-\theta}.
	\end{aligned}
	\end{equation}
	From \eqref{21}--\eqref{23}, we can derive 
	\begin{equation}\label{24}
	\begin{aligned}
	\int_{\R^3}|u|^\alpha |v|^\beta \dx&=\int_{\R^3}|u|^{\alpha-1} |v|^{\beta-1}|uv| \dx\\
	&\leqslant
	\left(\int_{\R^3}|u|^6\dx\right)^\frac{\alpha-1}{6}\left(\int_{\R^3}|v|^6\dx\right)^\frac{\beta-1}{6}\left(\int_{\R^3}|uv|^r\dx\right)^\frac{1}{r}\\
	&\leqslant S^{-\frac{\alpha+\beta-2}{2}}||z||^{\alpha+\beta-2}\left(\int_{\R^3}|uv|^r\dx\right)^\frac{1}{r}\\
	&\leqslant S^{-\frac{9\theta}{2\theta+1}}||z||^{\alpha+\beta-2+\frac{6\theta}{r}}\left(\int_{\R^3}|uv|\dx\right)^{\frac{1-\theta}{r}}\\
	&\leqslant \left(\frac{1}{2}\max\left\{S^{-1}|\mathcal{M}|^{\frac{2}{3}}, \frac{1}{c} \right\}\right)^{\frac{1-\theta}{2\theta+1}}S^{-\frac{9\theta}{2\theta+1}}||z||^{\alpha+\beta}\\
	&= \hat{c}||z||^{\alpha+\beta},
	\end{aligned}
	\end{equation}
	where $\hat{c}=\left(\frac{1}{2}\max\left\{S^{-1}|\mathcal{M}|^{\frac{2}{3}}, \frac{1}{c} \right\}\right)^{\frac{1-\theta}{2\theta+1}}S^{-\frac{9\theta}{2\theta+1}}$ is independent of $\lambda$. The proof is complete.
\end{proof}
The following lemma shows that the the coupled term $\int_{ \R^3}|u|^\alpha|v|^\beta \dx$ has BL-splitting property, which is an another version of Br\'{e}zis-Lieb lemma \cite{Brezis1983}.
\begin{lemma}\label{BL}\textup{(\cite[Lemma 4.2]{FSXJDE2010})}
	Let $\{z_n\}=\{(u_n,v_n)\}\subset E_\lambda$ be such that $(u_n, v_n)\rightharpoonup(u,v)$ in $E_\lambda$. Then
	$$\lim\limits_{n\to\infty}\int_{ \R^3}(|u_n|^\alpha|v_n|^\beta-|u_n-u|^\alpha|v_n-v|^\beta)\dx=\int_{ \R^3}|u|^\alpha|v|^\beta \dx.$$
\end{lemma}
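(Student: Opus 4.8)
The plan is to reduce this to the classical Br\'ezis--Lieb scheme, the only genuinely new feature being the product structure of the coupling density $j(s,t)=|s|^\alpha|t|^\beta$. First I would pass to a subsequence along which $u_n\to u$ and $v_n\to v$ a.e. in $\R^3$: weak convergence in $E_\lambda$ forces $u_n\rightharpoonup u$ and $v_n\rightharpoonup v$ in $\mathcal{D}^{1,2}(\R^3)$, hence strong convergence in $L^2_{\mathrm{loc}}$ by the Rellich theorem on every ball, and a diagonal extraction yields the a.e. statement. Writing $p=u_n-u$ and $q=v_n-v$ (which tend to $0$ a.e.), it follows at once that $|u_n|^\alpha|v_n|^\beta-|u_n-u|^\alpha|v_n-v|^\beta\to|u|^\alpha|v|^\beta$ a.e., so the entire issue is to justify passing to the limit under the integral sign.

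Next I would make the cancellation explicit through the telescoping identity
\begin{equation*}
|u_n|^\alpha|v_n|^\beta-|p|^\alpha|q|^\beta=\bigl(|p+u|^\alpha-|p|^\alpha\bigr)|q+v|^\beta+|p|^\alpha\bigl(|q+v|^\beta-|q|^\beta\bigr),
\end{equation*}
and control each scalar difference by the elementary inequality: for every $\varepsilon>0$ there is $C_\varepsilon>0$ with $\bigl||p+u|^\alpha-|p|^\alpha\bigr|\leqslant\varepsilon|p|^\alpha+C_\varepsilon|u|^\alpha$, and similarly for the $\beta$-factor. Combined with $|q+v|^\beta\leqslant 2^{\beta-1}(|q|^\beta+|v|^\beta)$, this bounds the left-hand side by $\varepsilon$ times products of the form $|p|^\alpha|q|^\beta$ and $|p|^\alpha|v|^\beta$, plus a remainder built from the mixed products $|u|^\alpha|q|^\beta$, $|u|^\alpha|v|^\beta$, $|p|^\alpha|v|^\beta$ with $C_\varepsilon$ coefficients.

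The decisive step is then a generalized dominated-convergence (Fatou) argument in the spirit of Br\'ezis--Lieb. Setting $f_n=\bigl||u_n|^\alpha|v_n|^\beta-|p|^\alpha|q|^\beta-|u|^\alpha|v|^\beta\bigr|$, the estimates above give $f_n\leqslant\varepsilon\,h_n+Y_n$, where $h_n$ is a finite sum of the products listed and $Y_n\to0$ a.e. The key observation is that each such product is uniformly bounded in $L^1(\R^3)$: since $\{z_n\}$ is bounded in $E_\lambda$ (weak convergence) and $\|\cdot\|\leqslant\|\cdot\|_\lambda$, applying Lemma \ref{L21} to the pairs $(p,q)$, $(p,v)$, $(u,q)$ yields $\int_{\R^3}h_n\,\dx\leqslant C$ uniformly in $n$, while $Y_n$ is dominated by a fixed $L^1$ function, so $\int_{\R^3}Y_n\,\dx\to0$. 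Hence $\limsup_n\int_{\R^3}f_n\,\dx\leqslant C\varepsilon$, and letting $\varepsilon\to0$ concludes; as the limit is independent of the subsequence, the full sequence converges.

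The main obstacle is precisely the integrability over the whole space $\R^3$: unlike the standard Br\'ezis--Lieb lemma on a bounded domain, I cannot invoke uniform integrability furnished by a higher $L^p$ bound on a finite-measure set, and the remainder terms $|u|^\alpha|q|^\beta$ and $|p|^\alpha|v|^\beta$ couple a fixed function to a vanishing one over an unbounded region. This is exactly where Lemma \ref{L21} is indispensable: it controls $\int_{\R^3}|\cdot|^\alpha|\cdot|^\beta$ by the $E$-norm via hypothesis $(H_2)$ and the Sobolev inequality \eqref{S}, supplying the uniform $L^1$ bounds on all the mixed products. It is the presence of the potential contribution in $\|\cdot\|$ (through $(H_2)$), rather than merely the $\mathcal{D}^{1,2}$-seminorm, that prevents mass from leaking to infinity and makes the passage to the limit legitimate.
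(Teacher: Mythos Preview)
The paper does not supply its own proof of this lemma; it is quoted verbatim from \cite[Lemma 4.2]{FSXJDE2010}. So there is no in-paper argument to compare against, and the question is simply whether your sketch stands on its own.

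Your overall strategy is the right one and matches the standard Br\'ezis--Lieb scheme. The gap is in the dominated-convergence step. After your telescoping and the scalar inequalities, the $C_\varepsilon$-part of the bound contains the cross terms $C_\varepsilon|u|^\alpha|q_n|^\beta$ and $C_\varepsilon|p_n|^\alpha|v_n|^\beta$ (one fixed factor, one vanishing factor). These terms carry a $C_\varepsilon$ coefficient, not an $\varepsilon$ coefficient, so they cannot be absorbed into $\varepsilon h_n$; and they are \emph{not} dominated by a fixed $L^1$ function, because the factor $p_n$ or $q_n$ varies with $n$. Lemma \ref{L21} gives a uniform $L^1$ bound on them, but uniform boundedness in $L^1$ together with a.e.\ convergence to $0$ is not sufficient for $\int\to 0$ (Fatou goes the wrong way, and DCT needs a fixed dominator). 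As written, your inequality $f_n\leqslant \varepsilon h_n+Y_n$ with ``$Y_n$ dominated by a fixed $L^1$ function'' cannot hold: either $Y_n$ contains these cross terms (and is not dominated), or they sit in $h_n$ with coefficient $C_\varepsilon$ (and the final $\limsup$ bound is $C_\varepsilon C$, which blows up as $\varepsilon\to 0$).

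The repair is to show directly that $\int_{\R^3}|u|^\alpha|q_n|^\beta\,\dx\to 0$ and $\int_{\R^3}|p_n|^\alpha|v|^\beta\,\dx\to 0$, exploiting that one factor is fixed. Using the chain of inequalities \eqref{22}--\eqref{24} from the proof of Lemma \ref{L21}, it is enough to prove $\int_{\R^3}|u\,q_n|\,\dx\to 0$. Split this over $\mathcal{M}$ and $\mathcal{M}^c$: on the finite-measure set $\mathcal{M}$ one has $q_n\to 0$ in $L^2(\mathcal{M})$ by Vitali (the $L^6$ bound gives uniform integrability), while on $\mathcal{M}^c$ the bound $|u q_n|\leqslant c^{-1}V^{1/2}|u|\cdot W^{1/2}|q_n|$ reduces the question to the $L^2$ pairing of the fixed function $V^{1/2}|u|$ with $W^{1/2}|q_n|$, and the latter converges weakly to $0$ in $L^2(\R^3)$ (bounded in $L^2$ and a.e.\ convergent to $0$). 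With these two limits in hand, the remaining $C_\varepsilon$-term is the genuinely fixed $|u|^\alpha|v|^\beta$, and your DCT argument then closes exactly as you wrote.
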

\par
To overcome the obstacle of finding bounded Palais-Smale sequence for $\mathcal{J}_{\bf{b},\lambda}$, we first define a cut-off function $\xi\in C^\infty(\R_+,[0,1])$ (see \eqref{199} above). For any $T>0$, we consider the truncated functional $\mathcal{J}_{\bf{b},\lambda}^T:E_\lambda\to\R$ 
\[
\mathcal{J}_{\bf{b},\lambda}^T(z)=\frac{1}{2}||z||_\lambda^2+\frac{1}{4}\xi\left(\frac{||z||_\lambda^2}{T^2}\right)\left(b_1\|\nabla u\|_2^4+b_2\|\nabla v\|_2^4\right)-\frac{1}{\alpha+\beta}\int_{ \R^3}|u^+|^\alpha|v^+|^\beta \dx.
\]
It is easy to see that $\mathcal{J}_{\bf{b},\lambda}^T$ is of class $C^1$ with derivative
\begin{equation}\label{26}
\begin{aligned}
\langle (\mathcal{J}_{\bf{b},\lambda}^T)'(z),\zeta\rangle=&
\langle z,\zeta\rangle_\lambda+\frac{1}{2T^2}\xi'\left(\frac{||z||_\lambda^2}{T^2}\right)\left(b_1\|\nabla u\|_2^4+b_2\|\nabla v\|_2^4\right)\langle z,\zeta\rangle_\lambda\\
&+\xi\left(\frac{||z||_\lambda^2}{T^2}\right)\left(b_1\|\nabla u\|_2^2\int_{ \R^3}\nabla u\nabla \varphi \dx +b_2\|\nabla v\|_2^2\int_{ \R^3}\nabla v\nabla\psi \dx \right)\\
&-\frac{\alpha}{\alpha+\beta}\int_{\R^3}|u^+|^{\alpha-2}u^+\varphi|v^+|^\beta \dx-\frac{\beta}{\alpha+\beta}\int_{\R^3}|u^+|^{\alpha}|v^+|^{\beta-2}v^+\psi \dx,
\end{aligned}
\end{equation}
for any $z=(u,v)\in E_\lambda$ and $\zeta=(\varphi, \psi)\in E_\lambda$.
With this penalization, for a properly chosen $T>0$ and $b_1+b_2$ small enough, we are able to find a Palais-Smale sequence $\{z_n\}$ of $\mathcal{J}_{\bf{b},\lambda}^T$ satisfying $||z_n||_\lambda\leqslant T$ and so $\{z_n\}$ is also a Palais-Smale sequence of $\mathcal{J}_{\bf{b},\lambda}$ satisfying $||z_n||_\lambda\leqslant T$.
\par
The following Lemmas \ref{L233}--\ref{L244} imply that $\mathcal{J}_{\bf{b},\lambda}^T$ has the mountain pass geometry.
\begin{lemma}\label{L233}
	Suppose that $\alpha,\beta>1$ with $\alpha+\beta\leqslant 4$ and $(H_1)$--$(H_2)$ hold. Then there exist $\rho, \eta>0$ such that for any $b_1$, $b_2$, $T>0$ and $\lambda\geqslant 1$, there holds
	\[\inf\{\mathcal{J}^T_{\bf{b},\lambda}:z\in E_\lambda\hspace{1ex} \textup{with}\hspace{1ex} ||z||_{\lambda}=\rho\}\geqslant \eta.\]
\end{lemma}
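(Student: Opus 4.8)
The plan is to exploit the fact that the Kirchhoff-truncation term in $\mathcal{J}^T_{\bf{b},\lambda}$ is nonnegative, so it can simply be discarded, leaving an estimate that is completely \emph{uniform} in the parameters $b_1,b_2,T$ and $\lambda$. First I would bound the coupling term from above. Since $|u^+|\leqslant|u|$ and $|v^+|\leqslant|v|$ pointwise, we have $\int_{\R^3}|u^+|^\alpha|v^+|^\beta\dx\leqslant\int_{\R^3}|u|^\alpha|v|^\beta\dx$, and Lemma \ref{L21} then gives $\int_{\R^3}|u^+|^\alpha|v^+|^\beta\dx\leqslant\hat{c}\,\|z\|^{\alpha+\beta}$, where $\hat{c}$ is independent of $\lambda$. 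Using $\|z\|\leqslant\|z\|_\lambda$ for $\lambda\geqslant1$, this upgrades to $\int_{\R^3}|u^+|^\alpha|v^+|^\beta\dx\leqslant\hat{c}\,\|z\|_\lambda^{\alpha+\beta}$.

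Next, because $\xi$ takes values in $[0,1]$ and $b_1,b_2>0$, the middle term $\frac{1}{4}\xi(\|z\|_\lambda^2/T^2)(b_1\|\nabla u\|_2^4+b_2\|\nabla v\|_2^4)$ is nonnegative for every choice of $b_1,b_2,T$. Discarding it yields, for any $z$ with $\|z\|_\lambda=\rho$,
\[
\mathcal{J}^T_{\bf{b},\lambda}(z)\geqslant\frac{1}{2}\|z\|_\lambda^2-\frac{\hat{c}}{\alpha+\beta}\|z\|_\lambda^{\alpha+\beta}=\frac{1}{2}\rho^2-\frac{\hat{c}}{\alpha+\beta}\rho^{\alpha+\beta}=\rho^2\Big(\frac{1}{2}-\frac{\hat{c}}{\alpha+\beta}\rho^{\alpha+\beta-2}\Big).
\]

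Finally, I would invoke $\alpha+\beta>2$ (which follows from $\alpha,\beta>1$) so that $\rho^{\alpha+\beta-2}\to0$ as $\rho\to0$. Choosing $\rho>0$ small enough that $\frac{\hat{c}}{\alpha+\beta}\rho^{\alpha+\beta-2}\leqslant\frac{1}{4}$ gives $\mathcal{J}^T_{\bf{b},\lambda}(z)\geqslant\frac{1}{4}\rho^2=:\eta>0$ on the sphere $\|z\|_\lambda=\rho$. Crucially, this choice of $\rho$ and $\eta$ depends only on $\hat{c}$ and on $\alpha+\beta$, hence is independent of $b_1,b_2,T$ and $\lambda$, as the statement demands.

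There is no genuine analytic obstacle here: the estimate is a textbook mountain-pass-near-the-origin argument, and the subquadratic-versus-superquadratic competition is decided entirely by $\alpha+\beta>2$. The only point requiring care is the uniformity claim, and the three ingredients that secure it are precisely that (i) the truncation term is nonnegative and can be thrown away irrespective of $b_1,b_2,T$, (ii) the constant $\hat{c}$ from Lemma \ref{L21} is $\lambda$-free, and (iii) the embedding inequality $\|z\|\leqslant\|z\|_\lambda$ holds for all $\lambda\geqslant1$. I would make each of these explicit so that no hidden dependence on the parameters slips into $\rho$ or $\eta$.
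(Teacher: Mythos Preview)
Your proposal is correct and follows essentially the same approach as the paper: drop the nonnegative truncation term, apply Lemma~\ref{L21} together with $\|z\|\leqslant\|z\|_\lambda$ to control the coupling term, and then choose $\rho$ so that $\frac{\hat{c}}{\alpha+\beta}\rho^{\alpha+\beta-2}\leqslant\frac{1}{4}$, yielding $\eta=\frac{1}{4}\rho^2$. The paper's proof is simply a terse version of yours, with the explicit choice $\rho=\big(\frac{\alpha+\beta}{4\hat{c}}\big)^{1/(\alpha+\beta-2)}$.
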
	
\begin{proof}
	It follows from Lemma \ref{L21} that
	\[\mathcal{J}_{\bf{b},\lambda}^T(z)\geqslant \frac{1}{2}||z||_\lambda^2-\frac{\hat{c}}{\alpha+\beta}||z||_\lambda^{\alpha+\beta}=||z||_\lambda^2\left(\frac{1}{2}-\frac{\hat{c}}{\alpha+\beta}||z||_\lambda^{\alpha+\beta-2}\right),\]
	where $\hat{c}$ is given in Lemma \ref{L21} and independent of $b_1$, $b_2$, $T$ and $\lambda.$ Consequently, by choosing $\rho=\left(\frac{\alpha+\beta}{4\hat{c}}\right)^{\frac{1}{\alpha+\beta-2}}>0$ and $\eta=\frac{1}{4}\rho^2$, we arrive at the desired result.
\end{proof}
\begin{lemma}\label{L244}
	Suppose that $\alpha$, $\beta>1$ with $\alpha+\beta\leqslant 4$ and $(H_1)$--$(H_2)$ hold. Then there exist $b^*>0$ and $e=(e_0,e_0)\in C_0^\infty(\Omega_1)\times C_0^\infty(\Omega_2)$ such that for any $T$, $\lambda>0$ and $b_1+b_2\in (0,b^*)$, we have $\mathcal{J}_{\bf{b},\lambda}^T(e)<0$ with $||e||_\lambda>\rho$.
\end{lemma}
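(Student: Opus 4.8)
The plan is to test $\mathcal{J}_{\bf{b},\lambda}^T$ along a ray $t\,(w_0,w_0)$, where $w_0$ is a fixed positive bump supported in the overlap of the two wells, and to exploit the fact that on such functions the $\lambda$-dependent part of the norm disappears. Since $\Omega_1\cap\Omega_2\neq\emptyset$ is open by $(H_1)$, I fix $w_0\in C_0^\infty(\Omega_1\cap\Omega_2)$ with $w_0>0$, $w_0\not\equiv 0$. For any $w$ with $\mathrm{supp}\,w\subset\Omega_1\cap\Omega_2$ the potential integrals vanish, because $V\equiv 0$ on $\Omega_1\supset\mathrm{supp}\,w$ and $W\equiv 0$ on $\Omega_2\supset\mathrm{supp}\,w$; hence for $z=(w,w)$,
\[
\|z\|_\lambda^2=\int_{\R^3}\bigl(|\nabla w|^2+|\nabla w|^2+\lambda V(x)w^2+\lambda W(x)w^2\bigr)\dx=2\|\nabla w\|_2^2,
\]
which is independent of $\lambda$. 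This is precisely the feature that will allow a single element $e$ to work for every $\lambda$.

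Evaluating along the ray, using $w_0>0$ so that $(tw_0)^+=tw_0$ for $t>0$, and bounding the cut-off by $0\le\xi\le1$ (see \eqref{199}), I obtain the estimate
\[
\mathcal{J}_{\bf{b},\lambda}^T\bigl(t(w_0,w_0)\bigr)\le t^2\|\nabla w_0\|_2^2+\tfrac14(b_1+b_2)\,t^4\|\nabla w_0\|_2^4-\frac{t^{\alpha+\beta}}{\alpha+\beta}\int_{\R^3}w_0^{\alpha+\beta}\dx,
\]
a bound that no longer sees $T$ or $\lambda$. The main obstacle is visible here: since $\alpha+\beta\le 4$, the truncation contribution $t^4$ grows at least as fast as the nonlinear gain $t^{\alpha+\beta}$, so one cannot simply send $t\to\infty$ to drive the right-hand side to $-\infty$ uniformly in $\mathbf{b}$. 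Moreover, because $T$ is arbitrary and may be large, one cannot invoke the vanishing of $\xi$ for large argument (that would require $t$ large \emph{relative to} $T$). The resolution is a two-step choice of $t$ and then of $b^*$.

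Because $\alpha,\beta>1$ forces $\alpha+\beta>2$, the term $t^2\|\nabla w_0\|_2^2$ is dominated by $-(\alpha+\beta)^{-1}t^{\alpha+\beta}\int w_0^{\alpha+\beta}$ for large $t$, so I first fix $t_0>0$ so large that both $\|t_0(w_0,w_0)\|_\lambda=\sqrt2\,t_0\|\nabla w_0\|_2>\rho$ (with $\rho$ from Lemma \ref{L233}) and
\[
A:=t_0^2\|\nabla w_0\|_2^2-\frac{t_0^{\alpha+\beta}}{\alpha+\beta}\int_{\R^3}w_0^{\alpha+\beta}\dx<0 .
\]
I then set $e_0:=t_0 w_0$ and $e:=(e_0,e_0)\in C_0^\infty(\Omega_1)\times C_0^\infty(\Omega_2)$, so that $\|\nabla e_0\|_2^4=t_0^4\|\nabla w_0\|_2^4$ is now a \emph{fixed} constant. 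Choosing $b^*:=-2A/\|\nabla e_0\|_2^4>0$, for every $b_1+b_2\in(0,b^*)$ the frozen truncation term is absorbed:
\[
\mathcal{J}_{\bf{b},\lambda}^T(e)\le A+\tfrac14(b_1+b_2)\|\nabla e_0\|_2^4<A+\tfrac14 b^*\|\nabla e_0\|_2^4=A-\tfrac12 A=\tfrac12 A<0,
\]
uniformly in $T$ and $\lambda$, while $\|e\|_\lambda=\sqrt2\,t_0\|\nabla w_0\|_2>\rho$. This is exactly the assertion of the lemma. The only point requiring care is the order of quantifiers — fixing $t_0$ (hence $e$) before shrinking $b^*$ — which is forced by the borderline growth $\alpha+\beta\le4$.
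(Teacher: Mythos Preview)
Your proof is correct and follows essentially the same approach as the paper: choose a positive bump $w_0\in C_0^\infty(\Omega_1\cap\Omega_2)$ so that the $\lambda$-dependent potential terms vanish, first fix a scaling $t_0$ (equivalently $e_0$) making the $\mathbf{b}$-free part of the energy strictly negative, and then shrink $b^*$ so that the Kirchhoff contribution $\tfrac14(b_1+b_2)\|\nabla e_0\|_2^4$ (bounded via $0\le\xi\le1$) is absorbed. The paper does exactly this, writing the $\mathbf{b}$-free part as an auxiliary functional $\mathcal{I}_\lambda$ and normalizing so that $\mathcal{I}_\lambda(e)\le -1$ before choosing $b^*$; your explicit choice $b^*=-2A/\|\nabla e_0\|_2^4$ is just a quantitative version of the same step.
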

\begin{proof}
	We define the functional $\mathcal{I}_\lambda:E_\lambda\to\R$ by
	\[\mathcal{I}_\lambda(z)=\frac{1}{2}||z||_\lambda^2-\frac{1}{\alpha+\beta}\int_{ \R^3}|u^+|^\alpha|v^+|^\beta \dx.\]
	Let $e_1=e_2\in C_0^\infty(\Omega_1\cap\Omega_2)$. Then we have $V(x)e_1=W(x)e_1=0$ on $\R^3$ and $||(e_1,e_1)||_\lambda=\sqrt{2}|\nabla e_1|_2$. Hence, by $\alpha+\beta>2$, we can see that
	\[\lim\limits_{t\to\infty}\mathcal{I}_{\lambda}(t(e_1,e_1))=\lim\limits_{t\to\infty}\left(t^2\int\limits_{\Omega_1\cap\Omega_2}|\nabla e_1|^2\dx-\frac{t^{\alpha+\beta}}{\alpha+\beta}\int\limits_{\Omega_1\cap\Omega_2}|e_1|^{\alpha+\beta} \dx\right)=-\infty.\]
	So we further have that there exists $e_0\in C_0^\infty(\Omega_1\cap\Omega_2)$ with $|\nabla e_0|_2>\frac{\rho}{\sqrt{2}}$ and $e_0^+\not\equiv0$, independent of $b_1$, $b_2$, $T$ and $\lambda$, such that $\mathcal{I}_{\lambda}(e)\leqslant -1$. Since
	\[\mathcal{J}_{\bf{b},\lambda}^T(e)=\mathcal{I}_{\lambda}(e)+\frac{b_1+b_2}{4}\xi\left(\frac{||e||_\lambda^2}{T^2}\right)|\nabla e_0|_2^4\leqslant -1+\frac{b_1+b_2}{4}|\nabla e_0|_2^4,\]
	there exists  $b^*>0$, independent of $\lambda$ and $T$, such that $\mathcal{J}_{\bf{b},\lambda}^T(e)<0$ for any $\lambda, T>0$ and $b_1+b_2\in(0,b^*)$. The proof is complete.
\end{proof}
By Lemmas \ref{L233}--\ref{L244} and the Mountain Pass Theorem without (PS) condition \cite{willem1996}, we can obtain that for any $\lambda\geqslant 1$, $T>0$ and $b_1+b_2\in(0,b^*)$, there is a $\textup{(PS)}_{c^{T}_{\bf{b},\lambda}}$ sequence $\{z_n\}=\{(u_n,v_n)\}\subset E_\lambda$ such that
\begin{equation}\label{27}
\mathcal{J}_{\bf{b},\lambda}^T\to c^{T}_{\bf{b},\lambda}\geqslant \eta>0\hspace{1ex}\text{and}\hspace{1ex}||(\mathcal{J}_{\bf{b},\lambda}^T)'(z_n)||_{E_\lambda^*}\to 0,\hspace{1ex}\text{as}\hspace{1ex}n\to\infty,
\end{equation}
where 
\[c^{T}_{\bf{b},\lambda}:=\inf\limits_{\gamma \in \Gamma }\max\limits_{t\in[0,1]} \mathcal{J}_{\bf{b},\lambda}^T(\gamma(t)),\]
and  
\[\Gamma=\{\gamma\in C([0,1],E_\lambda);\gamma(0)=(0,0),\gamma(1)=(e_0,e_0)\}.\]
To find a bounded Palais-Smale sequence,  we have an important upper bound for $c^{T}_{\bf{b},\lambda}$, which is the keystone of the truncation technique.
\begin{lemma}\label{L255}
	Suppose that $\alpha$, $\beta>1$ with $\alpha+\beta\leqslant 4$ and the conditions $(H_1)$--$(H_2)$ hold. Then for for any $\lambda\geqslant 1$, $T>0$ and $b_1+b_2\in(0,b^*)$, there exists $D>0$, independent of $b_1$, $b_2$, $T$ and $\lambda$, such tha $c^{T}_{\bf{b},\lambda}\leqslant D$.
\end{lemma}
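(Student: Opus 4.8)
The plan is to bound the min-max value from above by testing $\mathcal{J}_{\bf{b},\lambda}^T$ along a single admissible path, namely the linear segment $\gamma_0(t)=t\,e$ with $e=(e_0,e_0)$ the function produced in Lemma \ref{L244}. Since $\gamma_0(0)=(0,0)$ and $\gamma_0(1)=(e_0,e_0)$ with $\gamma_0$ continuous, we have $\gamma_0\in\Gamma$, and therefore
\[
c^{T}_{\bf{b},\lambda}\le \max_{t\in[0,1]}\mathcal{J}_{\bf{b},\lambda}^T(t\,e).
\]
Everything reduces to showing that the right-hand side is bounded by a constant free of $b_1,b_2,T$ and $\lambda$.

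Writing out $\mathcal{J}_{\bf{b},\lambda}^T(t\,e)$ with $u=v=te_0$ produces three contributions: the quadratic part $\tfrac{t^2}{2}\|e\|_\lambda^2$, the Kirchhoff correction $\tfrac{t^4}{4}\,\xi\!\left(t^2\|e\|_\lambda^2/T^2\right)(b_1+b_2)|\nabla e_0|_2^4$, and the coupling term $-\tfrac{t^{\alpha+\beta}}{\alpha+\beta}\int_{\R^3}|e_0^+|^{\alpha+\beta}\dx$. The crucial point is that $t$ ranges only over $[0,1]$ (the endpoint of the path is $e$ itself, not a dilate running off to infinity), so $t^2,t^4\le 1$; combined with $0\le\xi\le1$ and $b_1+b_2<b^*$, the first term is dominated by $\tfrac12\|e\|_\lambda^2$ and the Kirchhoff correction by $\tfrac{b^*}{4}|\nabla e_0|_2^4$, while the coupling term carries a minus sign and is therefore $\le 0$, so it may simply be discarded. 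It remains to note that these bounds are genuinely parameter-free: because $e_0\in C_0^\infty(\Omega_1\cap\Omega_2)$ we have $V(x)e_0\equiv W(x)e_0\equiv0$, whence $\|e\|_\lambda^2=2|\nabla e_0|_2^2$ is independent of $\lambda$, and both $e_0$ and $b^*$ were fixed in Lemma \ref{L244} independently of $b_1,b_2,T,\lambda$. Hence
\[
c^{T}_{\bf{b},\lambda}\le |\nabla e_0|_2^2+\frac{b^*}{4}|\nabla e_0|_2^4 =: D,
\]
and $D$ has exactly the required independence.

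There is no deep obstacle here, but one subtlety deserves attention: the $T$-dependence hidden inside the cut-off $\xi\!\left(t^2\|e\|_\lambda^2/T^2\right)$. A careless estimate might try to use that $\xi$ is supported where $t^2\|e\|_\lambda^2\le 2T^2$, which would feed a factor of order $T^4$ into the Kirchhoff term and destroy the uniformity in $T$. This danger is avoided precisely because we test along $t\in[0,1]$: the elementary bound $t^4\le1$ makes any appeal to the support of $\xi$ unnecessary, so the estimate never sees $T$. This is exactly why it is advantageous to exploit the already-constructed $e$ (for which $\mathcal{J}_{\bf{b},\lambda}^T(e)<0$ and $\|e\|_\lambda>\rho$ by Lemma \ref{L244}), rather than to work with an unbounded ray $t\mapsto t\,e$, $t\to\infty$.
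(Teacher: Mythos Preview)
Your argument is correct and follows the same route as the paper: test along the segment $t\mapsto t(e_0,e_0)$, $t\in[0,1]$, use $e_0\in C_0^\infty(\Omega_1\cap\Omega_2)$ to kill the $\lambda$-dependence, and bound $\xi\le 1$, $b_1+b_2<b^*$ to remove the remaining parameters. The only cosmetic difference is that the paper keeps the negative coupling term in the upper bound and simply declares the resulting one-variable function on $[0,1]$ to have a finite maximum $D$, whereas you discard that term and use $t\le 1$ to write $D$ explicitly; both are equally valid.
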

\begin{proof}
	It follows from $e_0\in C_0^\infty(\Omega_1\cap\Omega_2)$ that
	\[\mathcal{J}_{\bf{b},\lambda}^T(t(e_0,e_0))\leqslant t^2\int\limits_{\Omega_1\cap\Omega_2}|\nabla e_0|^2\dx+\frac{b^*t^4}{4}\left(\int\limits_{\Omega_1\cap\Omega_2}|\nabla e_0|^2\dx\right)^2-\frac{t^{\alpha+\beta}}{\alpha+\beta}\int\limits_{\Omega_1\cap\Omega_2}|e_0^+|^{\alpha+\beta} \dx.\]
	Hence, by the definition of $c^{T}_{\bf{b},\lambda}$, there exists a constant $D>0$ such that
	\[c^{T}_{\bf{b},\lambda}\leqslant \max\limits_{t\in[0,1]}\mathcal{J}_{\bf{b},\lambda}^T(t(e_0,e_0))\leqslant D.\]
	The proof is complete.
\end{proof}
\par
The following key lemma shows that for a properly  chosen $T> 0$,  $\{z_n\}$ is, up to a subsequence, a bounded $\textup{(PS)}_{c^{T}_{\bf{b},\lambda}}$ sequence of  $\mathcal{J}_{\bf{b},\lambda}$
satisfying $||z_n||_\lambda\leqslant T$ and \eqref{27}.
\begin{lemma}\label{L26}
	Suppose that $\alpha$, $\beta>1$ with $\alpha+\beta\leqslant 4$, the conditions $(H_1)$--$(H_2)$ hold, $\{z_n\}=\{(u_n,v_n)\}\subset E_\lambda$ is a $\textup{(PS)}_{c^{T}_{\bf{b},\lambda}}$ sequence of  $\mathcal{J}^T_{\bf{b},\lambda}$, and let $T=\sqrt{\frac{2(\alpha+\beta)(D+1)}{\alpha+\beta-2}}$. Then there exists $b_*\in(0,b^*)$ such that for any $\lambda\geqslant 1$ and $b_1+b_2\in(0,b_*)$, up to a subsequence, $||z_n||_\lambda\leqslant T$.
\end{lemma}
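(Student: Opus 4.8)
The plan is to test $(\mathcal{J}_{\mathbf{b},\lambda}^{T})'(z_n)$ against $z_n$ itself and subtract it (with weight $\tfrac{1}{\alpha+\beta}$) from $\mathcal{J}_{\mathbf{b},\lambda}^{T}(z_n)$, exactly as in the boundedness argument for a $4$-superlinear problem, but keeping careful track of the cut-off. Writing $s_n=\|z_n\|_\lambda^2/T^2$ and $A_n=b_1\|\nabla u_n\|_2^4+b_2\|\nabla v_n\|_2^4\ge 0$, a direct computation from \eqref{26}, in which the $\int_{\R^3}|u_n^+|^\alpha|v_n^+|^\beta\dx$ terms cancel and the quartic Kirchhoff terms regroup, gives
\[
\mathcal{J}_{\mathbf{b},\lambda}^{T}(z_n)-\frac{1}{\alpha+\beta}\big\langle (\mathcal{J}_{\mathbf{b},\lambda}^{T})'(z_n),z_n\big\rangle=\frac{\alpha+\beta-2}{2(\alpha+\beta)}\|z_n\|_\lambda^2+A_n\,h(s_n),
\]
where $h(s)=\big(\tfrac14-\tfrac{1}{\alpha+\beta}\big)\xi(s)-\tfrac{s\xi'(s)}{2(\alpha+\beta)}$. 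Since $\alpha+\beta>2$, the coefficient $\tfrac{\alpha+\beta-2}{2(\alpha+\beta)}$ is strictly positive, so this identity is the natural lever for bounding $\|z_n\|_\lambda$; the only nonstandard feature is the truncation remainder $A_n h(s_n)$.

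Next I would control $A_n h(s_n)$. Because $\xi$ and $\xi'$ vanish on $[2,\infty)$, one has $h(s_n)=0$ whenever $\|z_n\|_\lambda^2\ge 2T^2$; on the complementary range $\|z_n\|_\lambda^2<2T^2$ the norm dominates each gradient, $\|\nabla u_n\|_2^2,\|\nabla v_n\|_2^2\le\|z_n\|_\lambda^2<2T^2$, so $A_n\le(b_1+b_2)\|z_n\|_\lambda^4<4(b_1+b_2)T^4$. Using $\alpha+\beta\le 4$, $\xi\in[0,1]$ and $\|\xi'\|_\infty\le 2$, a short check (the coefficient of $\xi$ is nonpositive and the $\xi'$ term is nonnegative) shows $h(s)\ge\tfrac14-\tfrac{1}{\alpha+\beta}=:-c_1$ with $c_1\ge 0$, whence $-A_nh(s_n)\le c_1A_n\le 4c_1(b_1+b_2)T^4$. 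I would emphasise here that in the transition region $s\in(1,2)$ the term $h$ has \emph{no} definite sign, since the monotone part $\xi'\le 0$ and the negative coefficient of $\xi$ pull in opposite directions; thus only this uniform bound, and not an exact cancellation, is available.

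I would then feed in the $\textup{(PS)}$ information. By Lemma \ref{L255}, $\mathcal{J}_{\mathbf{b},\lambda}^{T}(z_n)\to c^{T}_{\mathbf{b},\lambda}\le D$, while $|\langle(\mathcal{J}_{\mathbf{b},\lambda}^{T})'(z_n),z_n\rangle|\le o_n(1)\|z_n\|_\lambda$. Combining with the identity and the remainder bound yields
\[
\frac{\alpha+\beta-2}{2(\alpha+\beta)}\|z_n\|_\lambda^2\le D+4c_1(b_1+b_2)T^4+o_n(1)+o_n(1)\|z_n\|_\lambda .
\]
The quadratic left-hand side against the at-most-linear right-hand side first forces $\{z_n\}$ to be bounded (uniformly in $\lambda$, since $D$, $c_1$, $T$ are); boundedness absorbs the $o_n(1)\|z_n\|_\lambda$ term, and passing to the limit superior gives $\limsup_n\|z_n\|_\lambda^2\le\frac{2(\alpha+\beta)}{\alpha+\beta-2}\big(D+4c_1(b_1+b_2)T^4\big)$.

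The genuine obstacle, and the crux of the proof, is to upgrade this to the \emph{sharp} bound $\|z_n\|_\lambda\le T$ rather than merely $\|z_n\|_\lambda\le T+\varepsilon$. This is precisely where the deliberate slack in $T=\sqrt{2(\alpha+\beta)(D+1)/(\alpha+\beta-2)}$ — built from $D+1$, not $D$ — is exploited: it gives $\frac{2(\alpha+\beta)}{\alpha+\beta-2}D=T^2-\frac{2(\alpha+\beta)}{\alpha+\beta-2}$, so the last estimate reads $\limsup_n\|z_n\|_\lambda^2\le T^2-\frac{2(\alpha+\beta)}{\alpha+\beta-2}\big(1-4c_1(b_1+b_2)T^4\big)$. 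Choosing $b_*=\min\{b^*,(4c_1T^4)^{-1}\}$ (and any $b_*\le b^*$ when $c_1=0$, i.e. $\alpha+\beta=4$, where the remainder is identically zero) makes the bracket strictly positive for $b_1+b_2<b_*$, so $\limsup_n\|z_n\|_\lambda^2<T^2$. Hence $\|z_n\|_\lambda<T$ for all large $n$, and discarding finitely many terms produces the desired subsequence with $\|z_n\|_\lambda\le T$; all constants $D$, $c_1$, $T$, $b_*$ are independent of $\lambda\ge 1$, so the conclusion holds uniformly in $\lambda$.
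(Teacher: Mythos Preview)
Your proof is correct and rests on the same identity the paper uses, namely
\[
\mathcal{J}_{\mathbf{b},\lambda}^{T}(z_n)-\tfrac{1}{\alpha+\beta}\big\langle (\mathcal{J}_{\mathbf{b},\lambda}^{T})'(z_n),z_n\big\rangle
=\tfrac{\alpha+\beta-2}{2(\alpha+\beta)}\|z_n\|_\lambda^2+A_n\,h(s_n),
\]
together with the same two structural facts about the cut-off (that $\xi'\le 0$ kills one sign and that $\xi,\xi'$ vanish for $s\ge 2$) and the same exploitation of the slack $D+1$ versus $D$ in the definition of $T$. The only difference is organisational: the paper argues by contradiction and splits into the two cases $T<\|z_n\|_\lambda\le\sqrt{2}\,T$ and $\|z_n\|_\lambda>\sqrt{2}\,T$, deriving in each a lower bound for $c_{\mathbf{b},\lambda}^{T}$ that exceeds $D$; you instead run a single direct estimate, absorbing both regimes into the uniform bound $-A_nh(s_n)\le 4c_1(b_1+b_2)T^4$ and then reading off $\limsup_n\|z_n\|_\lambda^2<T^2$. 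Your packaging is slightly cleaner (no case split, no contradiction), while the paper's version makes the role of each threshold $T$ and $\sqrt{2}\,T$ more visible; substantively the arguments coincide.
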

\begin{proof}
	Arguing indirectly, suppose that $||z_n||_{\lambda}>T$ up to a subsequence. We next divide the proof into two separate cases.
	\par
	Case 1: $T<||z_n||_{\lambda}\leqslant \sqrt{2}T$. In this case, by \eqref{26}--\eqref{27} and $\xi'(t)\leqslant 0$ for all $t>0$, we have
	\begin{equation}\label{29}
	\begin{aligned}
	c^{T}_{\bf{b},\lambda}=&\lim\limits_{n\to\infty}\left(\mathcal{J}_{\bf{b},\lambda}^T(z_n)-\frac{1}{\alpha+\beta}\langle (\mathcal{J}_{\bf{b},\lambda}^T)'(z_n),z_n\rangle\right)\\
	=&\lim\limits_{n\to\infty}\left((\frac{1}{2}-\frac{1}{\alpha+\beta})||z_n||^2_{\lambda}-(\frac{1}{\alpha+\beta}-\frac{1}{4})\xi\left(\frac{||z||_\lambda^2}{T^2}\right)\left(b_1|\nabla u_n|_2^4+b_2|\nabla v_n|_2^4\right)\right.\\
	&\phantom{=\;\;}
	\left.-\frac{1}{2(\alpha+\beta)T^2}\xi'\left(\frac{||z||_\lambda^2}{T^2}\right)\left(b_1|\nabla u_n|_2^4+b_2|\nabla v_n|_2^4\right)||z_n||^2_{\lambda}\right)\\
	\geqslant& \liminf\limits_{n\to\infty}\left((\frac{1}{2}-\frac{1}{\alpha+\beta})||z_n||^2_{\lambda}-(\frac{1}{\alpha+\beta}-\frac{1}{4})\left(b_1+b_2\right)||z_n||^4_{\lambda}\right)\\
	\geqslant& (D+1)-\frac{4(4-\alpha-\beta)(b_1+b_2)}{(\alpha+\beta-2)^2}(D+1)^2,
	\end{aligned}
	\end{equation}
	which contradicts the fact that  $c^{T}_{\bf{b},\lambda}\leqslant D$ by choosing $b_*$ small enough.
	\par
	Case 2: $||z_n||_{\lambda}>\sqrt{2}T$. In this case, by the similar computation to \eqref{29} and using the fact that
	$\xi\left(\frac{||z||_\lambda^2}{T^2}\right)=\xi'\left(\frac{||z||_\lambda^2}{T^2}\right)=0$, we get
	\begin{equation}
	\begin{aligned}
	c^{T}_{\bf{b},\lambda}=&\lim\limits_{n\to\infty}\left(\mathcal{J}_{\bf{b},\lambda}^T(z_n)-\frac{1}{\alpha+\beta}\langle (\mathcal{J}_{\bf{b},\lambda}^T)'(z_n),z_n\rangle\right)\\
	=&\lim\limits_{n\to\infty}\left((\frac{1}{2}-\frac{1}{\alpha+\beta})||z_n||^2_{\lambda}-(\frac{1}{\alpha+\beta}-\frac{1}{4})\xi\left(\frac{||z||_\lambda^2}{T^2}\right)\left(b_1|\nabla u_n|_2^4+b_2|\nabla v_n|_2^4\right)\notag\right.\\
	&\phantom{=\;\;}
	\left.-\frac{1}{2(\alpha+\beta)T^2}\xi'\left(\frac{||z||_\lambda^2}{T^2}\right)\left(b_1|\nabla u_n|_2^4+b_2|\nabla v_n|_2^4\right)||z_n||^2_{\lambda}\right)\\
	\geqslant & 2(D+1), 
	\end{aligned}
	\end{equation}
	which is again a contradiction. Hence $||z_n||_{\lambda}\leqslant T$ and the proof is complete.
\end{proof}
\vskip2mm
{\section{Existence of positive vector solutions }\label{sec3}}
\setcounter{equation}{0}
\vskip2mm
To find the critical points of $\mathcal{J}_{\bf{b},\lambda}$, we first  establish the following 
parameter-dependent compactness condition for the functional $\mathcal{J}_{\bf{b},\lambda}$.
\vskip2mm
\begin{proposition}\label{L28}
	Suppose that $\alpha,\beta>1$ with $\alpha+\beta\leqslant 4$ and the conditions $(V_1)$--$(V_2)$ hold, and let $T=\sqrt{\frac{2(\alpha+\beta)(D+1)}{\alpha+\beta-2}}$. Then there exists $\Lambda_1> 1$ such that  $\mathcal{J}_{\bf{b},\lambda}$ satisfies $\textup{(PS)}_{c^{T}_{\bf{b},\lambda}}$ condition in $E_\lambda$ for any $b_1+b_2\in(0,b_*)$ and $\lambda\in(\Lambda_1,\infty)$.
\end{proposition}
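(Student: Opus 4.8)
The plan is to show that any $\textup{(PS)}_{c^{T}_{\bf b,\lambda}}$ sequence $\{z_n\}=\{(u_n,v_n)\}$ is strongly convergent once $\lambda$ is large; in the situation of interest $\{z_n\}$ is bounded by $T$ thanks to Lemma \ref{L26} (this boundedness being exactly what the truncation in \eqref{110} buys us), so I may pass to a subsequence with $z_n\rightharpoonup z=(u,v)$ in $E_\lambda$, $u_n\to u$, $v_n\to v$ in $L^p_{\mathrm{loc}}$ $(p<6)$ and a.e.. Set $w_n=z_n-z=(\phi_n,\psi_n)\rightharpoonup 0$; the goal is $\|w_n\|_\lambda\to0$. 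Since $w_n\rightharpoonup0$ and $\mathcal{J}_{\bf b,\lambda}'(z)\in E_\lambda^*$ is fixed, $\langle\mathcal{J}_{\bf b,\lambda}'(z),w_n\rangle\to0$; and $\|\mathcal{J}_{\bf b,\lambda}'(z_n)\|_{E_\lambda^*}\to0$ with $\{w_n\}$ bounded gives $\langle\mathcal{J}_{\bf b,\lambda}'(z_n),w_n\rangle\to0$. Hence it suffices to analyze $\langle\mathcal{J}_{\bf b,\lambda}'(z_n)-\mathcal{J}_{\bf b,\lambda}'(z),w_n\rangle\to0$.

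The crucial step is a $\lambda$-quantified refinement of Lemma \ref{L21} applied to $w_n$. I would split $\R^3=\mathcal{M}\cup\mathcal{M}^c$. Since $|\mathcal{M}|<\infty$ by $(H_2)$, combining Rellich compactness on $\mathcal{M}\cap B_R$ with a H\"older bound of the tail $\mathcal{M}\setminus B_R$ against the uniform $L^6$ bound yields $\int_{\mathcal{M}}|\phi_n\psi_n|\,\dx\to0$. On $\mathcal{M}^c$ one has $V(x)W(x)>c^2$, so pointwise $|\phi_n\psi_n|\le c^{-1}(V\phi_n^2)^{1/2}(W\psi_n^2)^{1/2}$; by Cauchy--Schwarz and $\int_{\R^3}(\lambda V\phi_n^2+\lambda W\psi_n^2)\,\dx\le\|w_n\|_\lambda^2$,
\[\int_{\mathcal{M}^c}|\phi_n\psi_n|\,\dx\le\frac{1}{2c\lambda}\|w_n\|_\lambda^2.\]
Feeding $\int_{\R^3}|\phi_n\psi_n|\,\dx\le o_n(1)+\tfrac{1}{2c\lambda}\|w_n\|_\lambda^2$ into the interpolation chain \eqref{22}--\eqref{24} (same $r$, $\theta$), using $\|w_n\|\le\|w_n\|_\lambda$ for $\lambda\ge1$ and subadditivity of $x\mapsto x^{(1-\theta)/r}$, gives
\[\int_{\R^3}|\phi_n|^\alpha|\psi_n|^\beta\,\dx\le o_n(1)+\frac{C}{\lambda^{\gamma}}\|w_n\|_\lambda^{\alpha+\beta},\qquad \gamma=\frac{1-\theta}{r}>0.\]

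To close the argument I would expand $\langle\mathcal{J}_{\bf b,\lambda}'(z_n)-\mathcal{J}_{\bf b,\lambda}'(z),w_n\rangle$ using \eqref{26}. The quadratic part is exactly $\|w_n\|_\lambda^2$; the Kirchhoff part is nonnegative because, writing $s=\|\nabla u_n\|_2$, $t=\|\nabla u\|_2$ and using $\int\nabla u_n\nabla u\le st$, the elementary identity $s^4+t^4-st(s^2+t^2)=(s-t)^2(s^2+st+t^2)\ge0$ shows $u\mapsto\|\nabla u\|_2^2(-\Delta u)$ is monotone; the coupling part, after a Br\'ezis--Lieb splitting (Lemma \ref{BL}) of the two derivative integrals and using a.e.\ convergence, reduces up to $o_n(1)$ to $\int_{\R^3}|\phi_n^+|^\alpha|\psi_n^+|^\beta\,\dx\le\int_{\R^3}|\phi_n|^\alpha|\psi_n|^\beta\,\dx$. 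Combining with the previous estimate,
\[\|w_n\|_\lambda^2\le o_n(1)+\frac{C}{\lambda^{\gamma}}\|w_n\|_\lambda^{\alpha+\beta}.\]
Writing $\ell=\limsup_n\|w_n\|_\lambda\le 2T$ (finite by boundedness), this forces $\ell^2\le C\lambda^{-\gamma}\ell^{\alpha+\beta}$; since $\alpha+\beta>2$, either $\ell=0$ or $\ell\ge(\lambda^\gamma/C)^{1/(\alpha+\beta-2)}$. Choosing $\Lambda_1>1$ so that $(\lambda^\gamma/C)^{1/(\alpha+\beta-2)}>2T$ for all $\lambda>\Lambda_1$ excludes the second case, so $\ell=0$ and $z_n\to z$ in $E_\lambda$.

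The main obstacle is the integral over $\mathcal{M}^c$: unlike the finite-measure set $\mathcal{M}$, there is no compactness there, and the best one can hope for is the uniform-in-$n$ bound of size $O(\lambda^{-1})$ coming from the potential term. The whole proof hinges on converting this small constant into genuine compactness, which succeeds only because the coupling is super-quadratic ($\alpha+\beta>2$) \emph{and} the sequence is a priori bounded by $T$, so that the super-linear quantity $\lambda^{-\gamma}\|w_n\|_\lambda^{\alpha+\beta}$ can be absorbed into $\|w_n\|_\lambda^2$ for $\lambda$ large; this is precisely the parameter-dependent mechanism that fixes the threshold $\Lambda_1$. The secondary technical points—the Br\'ezis--Lieb reduction of the derivative coupling integrals and the nonnegativity of the Kirchhoff contribution—I expect to be routine given Lemma \ref{BL} and the monotonicity identity above.
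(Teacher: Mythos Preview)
Your proposal is correct and follows essentially the same route as the paper: the bounded $\textup{(PS)}$ sequence from Lemma~\ref{L26}, the $\lambda$-quantified bound on $\int|\tilde u_n|^\alpha|\tilde v_n|^\beta$ via the $\mathcal M/\mathcal M^c$ split fed into the interpolation chain~\eqref{22}--\eqref{24}, and absorption of the super-quadratic remainder into $\|\tilde z_n\|_\lambda^2$ for large $\lambda$. The only tactical difference is that the paper handles the Kirchhoff contribution by passing to the limits $A^2=\lim|\nabla u_n|_2^2$, $B^2=\lim|\nabla v_n|_2^2$ and using the resulting limit identity~\eqref{214}--\eqref{215} together with Br\'ezis--Lieb for the norms to reach~\eqref{221}, whereas you discard it directly via the monotonicity of $u\mapsto\|\nabla u\|_2^2(-\Delta u)$; both arrive at the same final inequality and the same threshold~$\Lambda_1$.
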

\begin{proof}
	Let $\{z_n\}=\{(u_n,v_n)\}\subset E$ be a $\textup{(PS)}_{c^{T}_{\bf{b},\lambda}}$ sequence for $\mathcal{J}_{\bf{b},\lambda}$. From Lemma \ref{L26}, we know that $\{z_n\}$ is bounded in $E_\lambda$, i.e.,
	\[||z_n||_\lambda\leqslant T=\sqrt{\frac{2(\alpha+\beta)(D+1)}{\alpha+\beta-2}}.\]
	Then there exist a subsequence $\{z_n\}$ and $z=(u,v)$ in $E_\lambda$ such that
	\begin{equation}\label{28}
	\begin{cases}
	(u_n,v_n)\rightharpoonup(u,v),& \hspace{1ex}\textup{in} \hspace{1ex}E_\lambda;\\
	(u_n,v_n)\to(u,v),& \hspace{1ex}\textup{a.e. in} \hspace{1ex}\R^3;\\
	(u_n,v_n)\to(u,v),& \hspace{1ex}\textup{in} \hspace{1ex}L_{loc}^{s_1}(\R^3)\times L_{loc}^{s_2}(\R^3),\hspace{1ex} 2\leqslant s_1, s_2<6.\\
	\end{cases}
	\end{equation}
	Now we prove that $z_n\to z$ in $E_\lambda$. Let $\tilde{z}_n=z_n-z$ with $\tilde{u}_n=u_n-u$ and $\tilde{v}_n=v_n-v$. By \eqref{28}, we have 
	\[||z||_\lambda\leqslant \liminf\limits_{n\to\infty}||z_n||_\lambda\leqslant T,\]
	and then
	\begin{equation}\label{211}
	||\tilde{z}_n||_\lambda=||z_n-z||_\lambda\leqslant 2T.
	\end{equation}
	It follows from $(H_2)$ that
	\[\begin{aligned}
	\int_{\R^3}|\tilde{u}_n\tilde{v}_n|\dx&=\int_{\mathcal{M}^c}|\tilde{u}_n\tilde{v}_n|\dx+\int_{\mathcal{M}}|\tilde{u}_n\tilde{v}_n|\dx\\
	&\leqslant \frac{1}{2\lambda c}\int_{\mathcal{M}^c}\lambda V(x)\tilde{u}_n^2+\lambda W(x)\tilde{v}_n^2\dx+ |\tilde{u}_n|_{5,\mathcal{M}}|\tilde{v}_n|_{5,\mathcal{M}}|\mathcal{M}|^{\frac{3}{5}}\\
	&\leqslant \frac{1}{2\lambda c}||\tilde{z}_n||_\lambda^2+o_n(1),
	\end{aligned}\]
	this together with Lemma \ref{L21} implies that
	\begin{equation}\label{2133}
	\begin{aligned}
	\int_{\R^3}|\tilde{u}_n|^\alpha|\tilde{v}_n|^\beta \dx
	&\leqslant S^{-\frac{9\theta}{2\theta+1}}||\tilde{z}_n||_\lambda^{\alpha+\beta-2+\frac{6\theta}{2\theta+1}}\left(\int_{\R^3}|\tilde{u}_n\tilde{v}_n|\dx\right)^{\frac{1-\theta}{2\theta+1}}\\
	&\leqslant \left(\frac{1}{2\lambda c}\right)^{\frac{1-\theta}{2\theta+1}}S^{-\frac{9\theta}{2\theta+1}}||\tilde{z}_n||_\lambda^{\alpha+\beta}+o_n(1),
	\end{aligned}
	\end{equation}
	where $\theta=\frac{\alpha+\beta-2}{16-2(\alpha+\beta)}\in (0,\frac{1}{4}]$. 
	Moreover, there exist $A$, $B\in\R$ such that
	\[\int_{ \R^3}|\nabla u_n|^2\dx\to A^2,\hspace{2ex}\int_{ \R^3}|\nabla v_n|^2\dx\to B^2\]
	and 
	\[\int_{ \R^3}\|\nabla u\|^2\dx\leqslant A^2,\hspace{2ex}\int_{ \R^3}\|\nabla v\|^2\dx\leqslant B^2.\]
	As in \cite{FSXJDE2010}, it follows from \eqref{28} and the Lebesgue Dominated Convergence Theorem that 
	\begin{equation}\label{212}
	\lim\limits_{n\to\infty}\int_{ \R^3}|u_n|^{\alpha-2}u_n\varphi|v_n|^\beta \dx=\int_{ \R^3}|u|^{\alpha-2}u\varphi|v|^\beta \dx,\textup{ for any } \varphi\in C_0^\infty(\R^3)
	\end{equation}
	and
	\begin{equation}\label{213}
	\lim\limits_{n\to\infty}\int_{ \R^3}|u_n|^{\alpha}|v_n|^{\beta-2}v_n\psi \dx=\int_{ \R^3}|u|^{\alpha}|v|^{\beta-2}v\psi \dx,\textup{ for any } \psi\in C_0^\infty(\R^3).
	\end{equation}
	Then by $ \mathcal{J}_{\bf{b},\lambda}'(z_n)\to0$ and \eqref{212}--\eqref{213} , we can get that for any  $\zeta=(\varphi,\psi)\in C_0^\infty(\R^3)\times C_0^\infty(\R^3)$,
	\begin{equation}\label{214}
	\begin{aligned}
	&\langle z,\zeta\rangle_\lambda-\frac{1}{\alpha+\beta}\left(\alpha\int_{\R^3}|u^+|^{\alpha-2}u^+\varphi|v^+|^\beta \dx+{\beta}\int_{\R^3}|u^+|^{\alpha}|v^+|^{\beta-2}v^+\psi \dx\right) \\
	&+b_1A^2\int_{ \R^3}\nabla u\nabla \varphi \dx+b_2B^2\int_{ \R^3}\nabla v\nabla\psi \dx=0.
	\end{aligned}
	\end{equation}
	Taking $\zeta=z$ in \eqref{214}, we have 
	\begin{equation}\label{215}
	\begin{aligned}
	||z||^2_\lambda
	+b_1A^2\|\nabla u\|_2^2+b_2B^2\|\nabla v\|_2^2-\int_{\R^3}|u^+|^{\alpha}|v^+|^\beta \dx =0.
	\end{aligned}
	\end{equation}
	Note that
	\begin{equation}
	\begin{aligned}
	||z_n||^2_\lambda
	+b_1|\nabla u_n|_2^4+b_2|\nabla v_n|_2^4 \dx-\int_{\R^3}|u_n^+|^{\alpha}|v_n^+|^\beta \dx =o_n(1).
	\end{aligned}
	\end{equation}
	By the Br\'{e}zis-Lieb Lemma \cite{Brezis1983}, we have that
	\[
	||\tilde{z}_n||_\lambda^2=||z_n||_\lambda^2-||z||_\lambda^2+o_n(1),
	\]
	\begin{equation}\label{218}
	\|\nabla \tilde{u}_n\|_2^2=\|\nabla u_n\|_2^2-\|\nabla u\|_2^2+o_n(1),\hspace{2ex} \|\nabla \tilde{v}_n\|_2^2=\|\nabla v_n\|_2^2-\|\nabla v\|_2^2+o_n(1).
	\end{equation}
	It follows from \eqref{215}--\eqref{218} and Lemma \ref{BL} that 
	\begin{equation}\label{221}
	\begin{aligned}
	o_n(1)=&||z_n||^2_\lambda
	+b_1|\nabla u_n|_2^4+b_2|\nabla v_n|_2^4 \dx-\int_{\R^3}|u_n^+|^{\alpha}|v_n^+|^\beta \dx \\
	&- ||z||^2_\lambda
	-b_1A^2\|\nabla u\|_2^2-b_2B^2\|\nabla v\|_2^2+\int_{\R^3}|u^+|^{\alpha}|v^+|^\beta \dx \\
	=&||\tilde{z}_n||^2_\lambda +b_1A^2|\nabla \tilde{u}_n|_2^2+b_2B^2|\nabla \tilde{v}_n|_2^2-\int_{\R^3}|\tilde{u}_n^+|^{\alpha}|\tilde{v}_n^+|^\beta \dx.
	\end{aligned}
	\end{equation}
	Then by \eqref{24}, \eqref{211}--\eqref{2133} and \eqref{221}, we get that
	\[
		\begin{aligned}
	o_n(1)=&||\tilde{z}_n||^2_\lambda-\int_{\R^3}|\tilde{u}_n^+|^{\alpha}|\tilde{v}_n^+|^\beta \dx +b_1A^2|\nabla \tilde{u}_n|_2^2+b_2B^2|\nabla \tilde{v}_n|_2^2\\
	\geqslant & ||\tilde{z}_n||^2_\lambda-\int_{\R^3}|\tilde{u}_n|^{\alpha}|\tilde{v}_n|^\beta \dx\\
	=&||\tilde{z}_n||^2_\lambda-\left(\int_{\R^3}|\tilde{u}_n|^{\alpha}|\tilde{v}_n|^\beta \dx\right)^{\frac{\alpha+\beta-2}{\alpha+\beta}}
	\left(\int_{\R^3}|\tilde{u}_n|^{\alpha}|\tilde{v}_n|^\beta \dx\right)^{\frac{2}{\alpha+\beta}}\\
	\geqslant &||\tilde{z}_n||^2_\lambda-(\hat{c})^{\frac{\alpha+\beta-2}{\alpha+\beta}}(2T)^{\alpha+\beta-2}\left(\frac{1}{2\lambda c}\right)^{\frac{1-\theta}{8\theta+1}}S^{-\frac{9\theta}{8\theta+1}}||\tilde{z}_n||^2_\lambda,
	\end{aligned}
	\]
	which implies that there exists $\Lambda_1> 1$ such that $\tilde{z}_n\to 0$ in $E_\lambda$ for all $\lambda>\Lambda_1$ and $0<b_1+b_2<b_*$. Consequently, this completes the proof.
\end{proof}
\begin{lemma}\label{lem3.2}
	Given  $\varepsilon>0$  and  $D>0$ , there exist  $\Lambda_{2}=\Lambda_2\left(\varepsilon, D\right)>0$  and  $R_{\varepsilon}=R\left(\varepsilon, D\right)>0$  such that, if  $\{z_n\}=
	\{(u_{n}, v_{n})\} \subset E_{\lambda}$  is a  $(\mathrm{PS})_{{c^{T}_{\bf{b},\lambda}}}$  sequence for  $\mathcal{J}_{\bf{b},\lambda}$  with  ${c^{T}_{\bf{b},\lambda}} \leqslant D$  and  $\lambda > \Lambda_{2}$ , then
	\[
	\limsup _{n \rightarrow \infty} \int_{B_{R_{\varepsilon}}^{c}}|u_{n}|^{\alpha}|v_{n}|^{\beta}\dx \leqslant \varepsilon.
	\]
\end{lemma}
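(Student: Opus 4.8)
The plan is to bound the tail $\int_{B^c_R}|u_n|^\alpha|v_n|^\beta\,\dx$ by first reducing it, through the interpolation chain already established in Lemma \ref{L21}, to the tail of the bilinear quantity $\int_{B^c_R}|u_nv_n|\,\dx$, and then to control that bilinear tail by splitting $B^c_R$ into $B^c_R\cap\mathcal{M}$ and $B^c_R\cap\mathcal{M}^c$. To begin, I would record the uniform bound: since $\{z_n\}$ is a $\textup{(PS)}_{c^{T}_{\bf{b},\lambda}}$ sequence for $\mathcal{J}_{\bf{b},\lambda}$ with $c^{T}_{\bf{b},\lambda}\leqslant D$, Lemma \ref{L26} yields $\|z_n\|_\lambda\leqslant T=\sqrt{\frac{2(\alpha+\beta)(D+1)}{\alpha+\beta-2}}$, a bound depending only on $D$ (and on $\alpha,\beta$); in particular $\|u_n\|_6,\|v_n\|_6\leqslant S^{-1/2}T$ by \eqref{S}. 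All the constants below will be traced back to this $T$, so that the thresholds $R_\varepsilon$ and $\Lambda_2$ end up depending only on $\varepsilon$ and $D$.

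Next I would localise the estimate \eqref{24}. Running the two H\"older steps in the proof of Lemma \ref{L21} over $\Omega=B^c_R$ instead of $\R^3$, and bounding the $L^6$-factors by their full-space norms, produces an exponent $\gamma=\frac{1-\theta}{2\theta+1}>0$ (with $\theta=\frac{\alpha+\beta-2}{16-2(\alpha+\beta)}\in(0,\tfrac14]$) and a constant $C=C(T)$, independent of $n$, $R$, $\lambda$, $b_1$, $b_2$, such that
\[
\int_{B^c_R}|u_n|^\alpha|v_n|^\beta\,\dx\leqslant C(T)\left(\int_{B^c_R}|u_nv_n|\,\dx\right)^{\gamma}.
\]
Hence it suffices to make the bilinear tail small, uniformly in $n$.

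For the bilinear tail I would use $(H_2)$. On $B^c_R\cap\mathcal{M}^c$ one has $V(x)W(x)>c^2$, so $|u_nv_n|\leqslant\frac{1}{2c}(V u_n^2+W v_n^2)=\frac{1}{2\lambda c}(\lambda V u_n^2+\lambda W v_n^2)$, whence
\[
\int_{B^c_R\cap\mathcal{M}^c}|u_nv_n|\,\dx\leqslant\frac{1}{2\lambda c}\|z_n\|_\lambda^2\leqslant\frac{T^2}{2\lambda c}.
\]
On $B^c_R\cap\mathcal{M}$, H\"older and \eqref{S} give
\[
\int_{B^c_R\cap\mathcal{M}}|u_nv_n|\,\dx\leqslant\|u_n\|_6\|v_n\|_6\,|\mathcal{M}\setminus B_R|^{2/3}\leqslant S^{-1}T^2\,|\mathcal{M}\setminus B_R|^{2/3},
\]
and, since $|\mathcal{M}|<\infty$ by $(H_2)$, we have $|\mathcal{M}\setminus B_R|\to0$ as $R\to\infty$. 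Both right-hand sides are uniform in $n$.

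Finally I would assemble the thresholds. Given $\varepsilon$ (and $D$, hence $T$), fix a target $\delta>0$ with $C(T)\delta^{\gamma}\leqslant\varepsilon$; then choose $R_\varepsilon=R_\varepsilon(\varepsilon,D)$ so large that $S^{-1}T^2|\mathcal{M}\setminus B_{R_\varepsilon}|^{2/3}\leqslant\delta/2$, and choose $\Lambda_2=\Lambda_2(\varepsilon,D)$ so large that $T^2/(2\lambda c)\leqslant\delta/2$ for all $\lambda>\Lambda_2$. Combining the two displays gives $\int_{B^c_{R_\varepsilon}}|u_nv_n|\,\dx\leqslant\delta$ for every $n$, and feeding this into the localised \eqref{24} yields $\int_{B^c_{R_\varepsilon}}|u_n|^\alpha|v_n|^\beta\,\dx\leqslant C(T)\delta^{\gamma}\leqslant\varepsilon$ for every $n$; taking $\limsup_{n\to\infty}$ completes the proof. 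The only real obstacle is bookkeeping rather than analysis: one must verify that the constant $C(T)$ from the localised \eqref{24} and the a priori bound $T$ genuinely depend only on $D$ and $(\alpha,\beta)$, so that $R_\varepsilon$ and $\Lambda_2$ are truly independent of $\lambda$, $b_1$, $b_2$ and of the particular $\textup{(PS)}$ sequence.
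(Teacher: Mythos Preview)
Your proposal is correct and follows essentially the same route as the paper: localise the interpolation chain \eqref{24} to $B_R^c$ to reduce to the bilinear tail $\int_{B_R^c}|u_nv_n|\,\dx$, split that tail over $\mathcal{M}$ and $\mathcal{M}^c$, control the $\mathcal{M}^c$-piece by $\frac{1}{\lambda}$ via $V W>c^2$ and the $\mathcal{M}$-piece by $|\mathcal{M}\setminus B_R|^{2/3}$ via H\"older, and then choose $R$ and $\lambda$ large. One small caveat: you invoke Lemma~\ref{L26} for the a priori bound $\|z_n\|_\lambda\leqslant T$, but that lemma is stated for the truncated functional $\mathcal{J}^T_{\mathbf b,\lambda}$, not for $\mathcal{J}_{\mathbf b,\lambda}$; the paper simply takes boundedness as given (it holds in the only application, via the truncation argument in the proof of Theorem~\ref{the1}).
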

\begin{proof}
It follows from \eqref{24} and the   boundedness of $\{z_n\}\subset E_\lambda$ that
\begin{equation}\label{equ3.13}
\int_{B_{R}^{c}}|u_n|^\alpha |v_n|^\beta \dx \leqslant C\left(\int_{B_{R}^{c}}|u_nv_n|\dx\right)^{\kappa},
\end{equation}
for any $R > 0$,
where $\kappa =\frac{1-\theta}{2\theta+1}$ for $\theta\in(0,\frac{1}{4}]$. By \eqref{S}, Young and H\" older inequalities, we have
\begin{equation}\label{equ3.14}
\begin{aligned}
\int_{B_{R}^{c} \cap \mathcal{M}}\left|u_{n} v_{n}\right|\dx & \leqslant \frac{1}{2} \int_{B_{R}^{c} \cap \mathcal{M}}\left(\left|u_{n}\right|^{2}+\left|v_{n}\right|^{2}\right)\dx \\
& \leqslant \frac{1}{2} |B_{R}^{c} \cap \mathcal{M}|^{\frac 23}\left(\left\|u_{n}\right\|_{L^{2^{*}}}^{2}+\left\|v_{n}\right\|_{L^{2^{*}}}^{2}\right) \\
& \leqslant C |B_{R}^{c} \cap \mathcal{M}|^{\frac 23}.
\end{aligned}
\end{equation}
On the other hand, from $(H_2)$, we have $V(x)W(x)>  c^2$ in $B_{R}^{c} \cap \mathcal{M}^c$. Then we obtain that
\[
\begin{aligned}
\int_{B_{R}^{c} \cap \mathcal{M}^{c}}\left|u_{n} v_{n}\right|\dx & \leqslant \frac{1}{\lambda c} \int_{B_{R}^{c} \cap \mathcal{M}^{c}} \sqrt{\lambda V(x)}\left|u_{n}\right| \sqrt{\lambda W(x)}\left|v_{n}\right|\dx \\
& \leqslant \frac{1}{2 \lambda c} \int_{\R^3} \left(\lambda V(x) u_{n}^{2}+\lambda W(x) v_{n}^{2}\right)\dx \leqslant \frac{C}{\lambda},
\end{aligned}
\]
this, together with \eqref{equ3.13}--\eqref{equ3.14}, implies that
\begin{equation}\label{equ3.15}
\int_{B_{R}^{c}}|u_n|^\alpha |v_n|^\beta \dx \leqslant C\left(|B_{R}^{c} \cap \mathcal{M}|^{\frac 23}+\frac{1}{\lambda}\right)^{\kappa}.
\end{equation}
Note that $|B_{R}^{c} \cap \mathcal{M}|^{\frac 23}\to0$ as $R\to\infty$. Then the right-hand side of \eqref{equ3.15} is small for $R$ and $\lambda$ large enough. The proof is complete.
\end{proof}
\begin{proof}[\rm{}\textbf{Proof of Theorem \ref{the1}}]
	Let $T=\sqrt{\frac{2(\alpha+\beta)(D+1)}{\alpha+\beta-2}}$.  By Lemmas \ref{L233}--\ref{L244}, we see that $\mathcal{J}_{\bf{b},\lambda}^T$ possesses the Mountain Pass geometry. Using a version of the Mountain Pass Theorem without (PS) condition \cite{willem1996}, we can obtain that for any $\lambda\geqslant 1$, and $0<b_1+b_2<b^*$, $\mathcal{J}_{\bf{b},\lambda}^T$ has a $\textup{(PS)}_{c^{T}_{\bf{b},\lambda}}$ sequence $\{z_n\}=\{(u_n,v_n)\}\subset E_\lambda$. By Lemmas \ref{L255}--\ref{L26}, we can get that there exists $b_*\in(0,b^*)$ such that for any $\lambda\geqslant 1$ and $b_1+b_2\in(0,b_*)$, up to a  subsequence, $\{z_n\}$ is a $\textup{(PS)}_{c^{T}_{\bf{b},\lambda}}$ sequence of $\mathcal{J}_{\bf{b},\lambda}$ satisfying
	\begin{equation}\label{equ3.16}
	\sup\limits_{n\in\mathbb{N}}||z_n||_{\lambda}\leqslant T,\hspace{1ex}\mathcal{J}_{\bf{b},\lambda}(z_n)\to c^{T}_{\bf{b},\lambda}\hspace{1ex}\text{and}\hspace{1ex}||\mathcal{J}_{\bf{b},\lambda}'(z_n)||_{E^*_\lambda}\to 0,\hspace{1ex}\text{as}\hspace{1ex}n\to\infty.
	\end{equation}
	Taking $\Lambda^*=\max\{\Lambda_1,\Lambda_2\}$, it follows from 
    Proposition \ref{L28} that  $\mathcal{J}_{\bf{b},\lambda}$ satisfies $\textup{(PS)}_{c^{T}_{\bf{b},\lambda}}$ condition in $E_\lambda$ for any $b_1+b_2\in(0,b_*)$ and $\lambda\in(\Lambda^*,\infty)$. Thus, we may assume that $z_n\to z_{\bf{b},\lambda}$ in $E_\lambda$, and then
	\[||z_{\bf{b},\lambda}||_{\lambda}\leqslant T,\hspace{1ex}\mathcal{J}_{\bf{b},\lambda}(z_{\bf{b},\lambda})= c^{T}_{\bf{b},\lambda}\geqslant \eta>0\hspace{1ex}\text{and}\hspace{1ex}\mathcal{J}_{\bf{b},\lambda}'(z_{\bf{b},\lambda})= 0.\]
	Since 
	\[\begin{aligned}
	0=&\langle \mathcal{J}_{\bf{b},\lambda}'(z_{\bf{b},\lambda}),(u^-_{\bf{b},\lambda},v^-_{\bf{b},\lambda})\rangle\\
	=&||z^-_{\bf{b},\lambda}||^2_\lambda+b_1\|\nabla u\|^2_2\cdot\|\nabla u^-\|^2_2+b_2\|\nabla v\|^2_2\cdot\|\nabla v^-\|^2_2\\
	\geqslant &  ||z^-_{\bf{b},\lambda}||^2_\lambda,
	\end{aligned}\]
	we get that $u_{\bf{b},\lambda},v_{\bf{b},\lambda}\geqslant 0$ in $\R^3$. Now we claim that $u_{\bf{b},\lambda},v_{\bf{b},\lambda}\not\equiv 0$. Arguing by contradiction, suppose that $z_{\bf{b},\lambda}\equiv 0$, i.e., $u_n,v_n\to0$ in $L^2(B_{R_\epsilon})$. Combining \cite[Lemma 2.1]{FSXJDE2010},  the  boundedness of $\{z_n\}\subset E_\lambda$ with the Young inequality, we deduce that
	\begin{equation}\label{equ3.155}
	\int_{B_{R_{\varepsilon}}}|u_n|^{\alpha}|v_n|^{\beta}\dx \leqslant C_{1}\left(\int_{B_{R_{\varepsilon}}}|u_n v_n|\dx\right)^{\kappa } \leqslant C_{2}\left(\int_{B_{R_{\varepsilon}}}\left(|u_n|^{2}+|v_n|^{2}\right)\dx \right)^{\kappa } \rightarrow 0,
	\end{equation}
as $n\to\infty$.   From \eqref{equ3.16}, we deduce that
\[
\frac{\alpha+\beta-2}{2(\alpha+\beta)}\int_{\R^3}|u_n|^{\alpha}|v_n|^{\beta}\dx\geqslant \frac{\alpha+\beta-2}{2(\alpha+\beta)}\|z_n\|^2_\lambda\geqslant \mathcal{J}_{\bf{b},\lambda}(z_n)-\frac{1}{\alpha+\beta}\langle\mathcal{J}_{\bf{b},\lambda}'(z_n),z_n\rangle=c^{T}_{\bf{b},\lambda}+o_n(1)\|z_n\|_\lambda.
\]
Then, it follows from  Lemma \ref{lem3.2} and \eqref{equ3.155} that, for $\lambda>\Lambda^*$,
\[
\begin{aligned}
\frac{2(\alpha+\beta)}{\alpha+\beta-2}c^{T}_{\bf{b},\lambda}&\leqslant \lim_{n\to\infty} \int_{\R^3}|u_n|^{\alpha}|v_n|^{\beta}\dx\\
&=\lim_{n\to\infty} \int_{B_{R_{\varepsilon}}}|u_n|^{\alpha}|v_n|^{\beta}\dx+\lim_{n\to\infty} \int_{B^c_{R_{\varepsilon}}}|u_n|^{\alpha}|v_n|^{\beta}\dx\leqslant \varepsilon,
\end{aligned}
\]
which contradics the fact that $c^{T}_{\bf{b},\lambda}\geqslant\eta>0$. Hence, the claim holds true.
By using the strong maximum principle in each equation of \eqref{11}, we have that $u_{\bf{b},\lambda},v_{\bf{b},\lambda}> 0$ in $\R^3$. Moreover, note that $z_{\bf{b},\lambda}\neq \bf{0}$ and
	\begin{equation}\label{3133}
	\begin{aligned}
	0=&\langle \mathcal{J}_{\bf{b},\lambda}'(z_{\bf{b},\lambda}),z_{\bf{b},\lambda}\rangle\\
	=&||z_{\bf{b},\lambda}||^2_\lambda
	+b_1|\nabla u_{\bf{b},\lambda}|_2^4+b_2|\nabla v_{\bf{b},\lambda}|_2^4 -\int_{\R^3}|u_{\bf{b},\lambda}|^{\alpha}|u_{\bf{b},\lambda}|^\beta \dx\\
	\geqslant &||z_{\bf{b},\lambda}||^2_\lambda-\hat{c}||z_{\bf{b},\lambda}||^{\alpha+\beta},
	\end{aligned}
	\end{equation}
	then, there exists $\delta >0$, independent of $b_1,b_2$ and $\lambda$, such that $||z_{\bf{b},\lambda}||_\lambda>\delta$ for any $b_1,b_2$ and $\lambda$. The proof is complete.
\end{proof}

{\section{Asymptotic behavior of positive vector solutions }\label{sec4}}
\setcounter{equation}{0}

In this section, we study the asymptotic behavior of positive vector solutions for \eqref{11} and and give the proofs of Theorems \ref{the2}--\ref{the4}.
\par
\vskip2mm
\begin{proof}[\rm{}\textbf{Proof of Theorem \ref{the2}}]
	For any sequence $\lambda_n\to\infty$, let $b_1+b_2\in(0,b_*)$ be fixed and $z_n:=z_{\textbf{b},\lambda_n}$ be the  positive vector  solution of $\mathcal{K}_{\textbf{b},\lambda_n}$ obtained in Theorem \ref{the1}. By \eqref{13}, we can get that for any $n\in\mathbb{N}$,
	\begin{equation}\label{31}
	0<\delta\leqslant ||z_n||_{\lambda_n}\leqslant T.
	\end{equation}
	Then there exist a subsequence $\{z_n\}=\{(u_n,v_n)\}$ and $z_{\bf{b}}=(u_{\bf{b}},v_{\bf{b}})$ in $E$ such that
	\begin{equation}\label{322}
	\begin{cases}
	(u_n,v_n)\rightharpoonup(u_{\bf{b}},v_{\bf{b}}),& \hspace{1ex}\textup{in} \hspace{1ex}E;\\
	(u_n,v_n)\to(u_{\bf{b}},v_{\bf{b}}),& \hspace{1ex}\textup{a.e. in} \hspace{1ex}\R^3;\\
	(u_n,v_n)\to(u_{\bf{b}},v_{\bf{b}}),& \hspace{1ex}\textup{in} \hspace{1ex}L_{loc}^{s_1}(\R^3)\times L_{loc}^{s_2}(\R^3),\hspace{1ex} 2\leqslant s_1, s_2<6.\\
	\end{cases}
	\end{equation}
	We first prove $u_{\bf{b}}=0$ in $\Omega_1^c$. For $m\in\mathbb{N}$, set $C_m=\left\{ x\in\R^3: V(x)\geqslant\frac{1}{m} \right\}$, it follows from \eqref{31} that 
	\[\int_{C_m}u_n^2\dx\leqslant \frac{m}{\lambda_n}\int_{C_m}\lambda_nV(x)u_n^2\dx\leqslant \frac{m}{\lambda_n}||z_n||_{\lambda_n}^2\]
	and thus 
	\[\int_{C_m}u_{\bf{b}}^2\dx\leqslant \liminf\limits_{n\to \infty }\int_{C_m}u_n^2\dx=0,\]
	which implies that $u_{\bf{b}}\equiv 0$ in $\Omega_1^c=\bigcup_{m=1}^{n} C_m$. Since $\Omega_1$ has smooth boundary, we get that $u_{\bf{b}}\in H_0^1(\Omega_1)$. Similarly, we have $v_{\bf{b}}\in H_0^1(\Omega_2)$.
	\par
	For any $\varphi\in C_0^\infty(\Omega_1)$, using $(\varphi,0)$ as a test function, we have
	\begin{equation}
	\left(1+b_1|\nabla u_n|^2_2\right)\int_{\R^3}\nabla u_n\nabla\varphi \dx=\frac{\alpha}{\alpha+\beta}\int_{\R^3}|u_n^+|^{\alpha-2}u_n^+\varphi|v_n^+|^\beta \dx.
	\end{equation}
	Since $\varphi$ has compact support, using the same argument as that in \eqref{212} and taking limit in the above equality, we get
	\begin{equation}\label{34}
	\left(1+b_1|\nabla u_n|^2_2\right)\int\limits_{\Omega_1\cup\Omega_2}\nabla u_{\bf{b}}\nabla\varphi \dx=\frac{\alpha}{\alpha+\beta}\int\limits_{\Omega_1\cup\Omega_2}|u_{\bf{b}}^+|^{\alpha-2}u_{\bf{b}}^+\varphi|v_{\bf{b}}^+|^\beta \dx,\hspace{1ex} \forall \varphi\in C_0^\infty(\Omega_1).
	\end{equation}
	Similarly, for any $\psi\in C_0^\infty(\Omega_2)$, we have 
	\begin{equation}\label{35}
	\left(1+b_2|\nabla v_n|^2_2\right)\int\limits_{\Omega_1\cup\Omega_2}\nabla v_{\bf{b}}\nabla\psi \dx=\frac{\beta}{\alpha+\beta}\int\limits_{\Omega_1\cup\Omega_2}|u_{\bf{b}}^+|^{\alpha}|v_{\bf{b}}^+|^{\beta-2}v_{\bf{b}}^+\psi \dx,\hspace{1ex} \forall \psi\in C_0^\infty(\Omega_2).
	\end{equation}
	\par
	Next we show that $z_n\to z_{\bf{b}}$ in $E$.
	Denote $\tilde{u}_n=u_n-u_{\bf{b}}$ and $\tilde{v}_n=v_n-v_{\bf{b}}$, then $\tilde{z}_n=z_n-z_{\bf{b}}$. Similiar to \eqref{2133}, we have 
	\begin{equation}\label{46}
	\begin{aligned}
	\int_{\R^3}|\tilde{u}_n|^\alpha|\tilde{v}_n|^\beta \dx
	&\leqslant S^{-\frac{9\theta}{2\theta+1}}||\tilde{z}_n||_{\lambda_n}^{\alpha+\beta-2+\frac{6\theta}{2\theta+1}}\left(\int_{\R^3}|\tilde{u}_n\tilde{v}_n|\dx\right)^{\frac{1-\theta}{2\theta+1}}\\
	&\leqslant \left(\frac{1}{2\lambda_n c}\right)^{\frac{1-\theta}{2\theta+1}}S^{-\frac{9\theta}{2\theta+1}}||\tilde{z}_n||_{\lambda_n}^{\alpha+\beta}+o_n(1)=o_n(1),
	\end{aligned}
	\end{equation}
	this together with Lemma \ref{BL} implies that
	\begin{equation}\label{38}
	\lim\limits_{n\to\infty}\int_{\R^3}|u_n^+|^{\alpha}|v_n^+|^{\beta} \dx=\int_{\R^3}|u_{\bf{b}}^+|^{\alpha}|v_{\bf{b}}^+|^{\beta} \dx=\int\limits_{\Omega_1\cup\Omega_2}|u_{\bf{b}}^+|^{\alpha}|v_{\bf{b}}^+|^{\beta} \dx.
	\end{equation}
	By \eqref{212}--\eqref{213} and
	\[\langle \mathcal{J}_{b,\lambda_n}'(z_n),z_n\rangle=\langle \mathcal{J}_{b,\lambda_n}'(z_n),(u_{\bf{b}},0)\rangle=\langle \mathcal{J}_{b,\lambda_n}'(z_n),(0,v_{\bf{b}})\rangle=o_n(1),\]
	we have 
	\begin{equation}
	||z_n||_{\lambda_n}^2+b_1|\nabla u_n|^4_2+b_2|\nabla v_n|^4_2=\int_{\R^3}|u_n^+|^{\alpha}|v_n^+|^{\beta} \dx+o_n(1),
	\end{equation}
	\begin{equation}
	||u_{\bf{b}}||^2+b_1|\nabla u_n|^2_2|\nabla u_{\bf{b}}|^2_2=\frac{\alpha}{\alpha+\beta}\int_{\R^3}|u_{\bf{b}}^+|^{\alpha}|v_{\bf{b}}^+|^{\beta} \dx+o_n(1),
	\end{equation}
	\begin{equation}\label{311}
	||v_{\bf{b}}||^2+b_2|\nabla v_n|^2_2|\nabla v_{\bf{b}}|^2_2=\frac{\beta}{\alpha+\beta}\int_{\R^3}|u_{\bf{b}}^+|^{\alpha}|v_{\bf{b}}^+|^{\beta} \dx+o_n(1).
	\end{equation}
	Up to a subsequence, we may assume that there exists $l_i>0(i=1,2,3)$ such that
	\[||z_n||_{\lambda_n}^2\to l_1,\hspace{1ex}|\nabla u_n|^2_2\to l_2,\hspace{1ex}|\nabla v_n|^2_2\to l_3.\]
	It follows from \eqref{38}--\eqref{311} that
	\[\begin{aligned}
	l_1+b_1l_2^2+b_2l_3^2&=||u_{\bf{b}}||^2+b_1l_2|\nabla u_{\bf{b}}|^2_2+||v_{\bf{b}}||^2+b_2l_3|\nabla v_{\bf{b}}|^2_2\\
	&\leqslant ||z_{\bf{b}}||^2+b_1l_2^2+b_2l_3^2.
	\end{aligned}\]
	Then, $l_1\leqslant ||z_{\bf{b}}||^2$. By the weakly lower semi-continuity of $||\cdot||$, we have
	\[||z_{\bf{b}}||^2\leqslant \liminf\limits_{n\to\infty}||z_n||^2\leqslant \limsup\limits_{n\to\infty}||z_n||^2\leqslant \lim\limits_{n\to\infty}||z_n||^2_{\lambda_n}=l_1\le||z_{\bf{b}}||^2,\]
	which implies that $z_n\to z_{\bf{b}}$ in $E$.
	\par
	Finally, we are going to show that $z_{\bf{b}}$ is a positive vector solution of $\mathcal{K}_{\bf{b},\infty}$. It follows from \eqref{34}--\eqref{35} that for any $\varphi\in C_0^\infty(\Omega_1)$ and $\psi\in C_0^\infty(\Omega_2)$,
	\begin{equation}
	\left(1+b_1\int_{\Omega_1}|\nabla u_{\bf{b}}|^2 \dx\right)\int_{\Omega_1}\nabla u_{\bf{b}}\nabla\varphi \dx=\frac{\alpha}{\alpha+\beta}\int_{\Omega_1}|u_{\bf{b}}^+|^{\alpha-2}u_{\bf{b}}^+\varphi|v_{\bf{b}}^+|^\beta \dx
	\end{equation}
	and
	\begin{equation}
	\left(1+b_2\int_{\Omega_2}|\nabla v_{\bf{b}}|^2 \dx\right)\int_{\Omega_2}\nabla v_{\bf{b}}\nabla\psi \dx=\frac{\beta}{\alpha+\beta}\int_{\Omega_2}|u_{\bf{b}}^+|^{\alpha}|v_{\bf{b}}^+|^{\beta-2}v_{\bf{b}}^+\psi \dx.
	\end{equation}
	Thus, $z_{\bf{b}}=(u_{\bf{b}},v_{\bf{b}})$ is a nonnegative vector solution of $\mathcal{K}_{\bf{b},\infty}$ by the density of $C_0^\infty(\Omega_i)$ in $H_0^1(\Omega_i)(i=1,2)$. By \eqref{31} and $z_n\to z_{\bf{b}}$ in $E$, we have
	\[||z_{\bf{b}}||=\lim\limits_{n\to\infty}||z_n||_{\lambda_n}\geqslant \delta>0,\]
	and so $u_{\bf{b}},v_{\bf{b}}\neq0$. 
	We can deduce that $u_{\bf{b}}>0$ in $\Omega_1$ and $v_{\bf{b}}>0$ in $\Omega_2$ by the strong maximum principle. The proof is complete.
\end{proof}
\begin{proof}[\rm{}\textbf{Proof of Theorem \ref{the3}}]
	For any sequence $b_n\to0$, let $\lambda\in(\Lambda^*,\infty)$ be fixed and $z_n:=z_{\textbf{b}_n,\lambda}$ be the  positive vector  solution of $\mathcal{K}_{\textbf{b}_n,\lambda}$ obtained in Theorem \ref{the1}. By \eqref{13}, we can get that for any $n\in\mathbb{N}$,
	\begin{equation}\label{313}
	0<\delta\leqslant ||z_n||_{\lambda}\leqslant T,
	\end{equation}
	i.e., $\{z_n\}$ is bounded in $E_\lambda$. Then there exist a subsequence $\{z_n\}$ and $z_\lambda=(u_\lambda,v_\lambda)$ in $E_\lambda$ such that $z_n\rightharpoonup z_\lambda$ in $E_\lambda$. By a similar argument as used in the proof of Proposition \ref{L28}, we get that $z_n\to z_\lambda$ in $E_\lambda$.
	\par
	Now, it is enough to show that $z_\lambda$ is a positive vector solution of \eqref{3res}. Note that $\mathcal{J}_{b_n,\lambda}'(z_n)\to0$, then for any $\zeta=(\varphi,\psi)\in E_\lambda$, using $(\varphi,0)$ as a test function, we have
	\begin{equation}\label{314}
	\int_{\R^3}(\nabla u_\lambda\nabla\varphi+\lambda V(x)u_\lambda\varphi) \dx=\frac{\alpha}{\alpha+\beta}\int_{\R^3}|u_\lambda^+|^{\alpha-2}u_\lambda^+\varphi|v_\lambda^+|^\beta \dx.
	\end{equation}
	Similarly, using $(0,\psi)$ as a test function, we have
	\begin{equation}\label{315}
	\int_{\R^3}(\nabla v_\lambda\nabla\psi+\lambda W(x)v_\lambda\psi) \dx=\frac{\beta}{\alpha+\beta}\int_{\R^3}|u_n^+|^{\alpha}|v_n^+|^{\beta-2}v_\lambda^+\psi \dx.
	\end{equation}
	Thus, $z_\lambda=(u_\lambda,v_\lambda)$ is a nonnegative vector solution of \eqref{3res}. By \eqref{313} and $z_n\to z_\lambda$ in $E_\lambda$, we have
	\[||z_\lambda||_\lambda=\lim\limits_{n\to\infty}||z_n||_{\lambda}\geqslant \delta>0,\]
	and so $u_\lambda,v_\lambda\neq0$.
	We have that $u_\lambda,v_\lambda>0$ in $\R^3$ by the strong maximum principle.
\end{proof}
\begin{proof}[\rm{}\textbf{Proof of Theorem \ref{the4}}] The proof of Theorem \ref{the4} is essentially same as the proof of Theorem \ref{the2}, so we omit it here.
\end{proof}

\par\noindent
\textbf{Acknowledgments}
\vskip2mm
This work is  supported by National Natural Science Foundation of China (No. 12071486).

\vskip6mm
{}
\end{document}